\newtheorem{theorem}{Theorem}
\newtheorem{lemma}{Lemma}
\newtheorem{rem}{Remark}
\newcommand{\NN}{{\mathbb N}}
\newcommand{\PP}{{\mathbb P}}
\newcommand{\ZZ}{{\mathbb Z}}
\newcommand{\RR}{{\mathbb R}}
\newcommand{\CC}{{\mathbb C}}
\newcommand{\FF}{{\mathbb F}}
\newcommand{\bsb}{\boldsymbol{b}}
\newcommand{\bsk}{\boldsymbol{k}}
\newcommand{\bsy}{\boldsymbol{y}}
\newcommand{\bsz}{\boldsymbol{z}}
\newcommand{\bsx}{\boldsymbol{x}}
\newcommand{\bsq}{\boldsymbol{q}}
\newcommand{\bsw}{\boldsymbol{w}}
\renewcommand{\(}{\left (}
  \renewcommand{\)}{\right )}
\newcommand{\bsc}{\boldsymbol{c}}
\newcommand{\bsl}{\boldsymbol{l}}
\newcommand{\rmk}{{\rm k}}
\newcommand{\wal}{{\rm wal}}
\newcommand{\Log}{{\rm Log}}
\newcommand{\bszero}{{\boldsymbol{0}}}
\newenvironment{proof}{\begin{trivlist}
    \item[\hskip\labelsep{\it Proof.}]}{$\hfill\Box$\end{trivlist}}
\title{\scshape{On hybrid point sets stemming from Halton-type Hammersley point sets and polynomial lattice point sets}} 
\author{Roswitha Hofer\thanks{Institute of Financial Mathematics and Applied Number Theory, Johannes Kepler University Linz, Altenbergerstr. 69, 4040 Linz, AUSTRIA. roswitha.hofer@jku.at}}
\date{}
\begin{document}
\maketitle

\begin{abstract}
In this paper we consider finite hybrid point sets that are the digital analogs to finite hybrid point sets introduced by Kritzer. Kritzer considered hybrid point sets that are a combination of lattice point sets and Hammersley point sets constructed using the ring of integers and the field of rational numbers. In this paper we consider finite hybrid point sets whose components stem from Halton-type Hammersley point sets and lattice point sets which are constructed using the arithmetic of the ring of polynomials and the field of rational functions over a finite field. We present existence results for such finite hybrid point sets with low discrepancy. 
\end{abstract}

\section{Introduction and preliminaries}

This work is motivated by applications of the theory
of uniform distribution modulo one to numerical integration that is based on the Koksma--Hlawka inequality. This inequality states an upper bound for the integration error for a probably very high dimensional function $f:[0,1]^s\to\RR$ when using a simple, equally weighted quadrature rule with $N$ nodes $\bsz_0,\,\bsz_1,\ldots,\,\bsz_{N-1}$. More exactly,
$$\left|\int_{[0,1]^s}f(\bsz)d\bsz \,-\,\frac 1N \sum_{n=0}^{N-1}f(\bsz_n)\right|\leq V(f)D_N^*(\bsz_n).$$
Here $V(f)$ denotes the variation of $f$ in the sense of Hardy and Krause and $D_N^*(\bsz_n)$ denotes the \emph{star discrepancy} of the node set $\bsz_0,\,\bsz_1,\ldots,\,\bsz_{N-1}$ which is defined in the following. 

The star discrepancy $D^*_N$ of a point set $\mathcal{P}=(\bsz_n)_{n=0,1,\ldots, N-1}$ in $[0,1)^s$ is given by 
$$D^*_N(\mathcal{P})=D^*_N(\bsz_n)=\sup_J\left|\frac{A(J,N)}{N}-\lambda_s(J)\right|$$
where the supremum is extended over all half-open subintervals $J$ of $[0,1)^s$ with the lower left corner in the origin, $\lambda_s$ denotes the $s$-dimensional Lebesgue measure, and the counting function $A(J,N)$ stands for 
$$\#\{0\leq n<N:\bsz_n\in J\}.$$

We define $\Log(x):=\max(1,\log(x))$ for real numbers $x>0$. Furthermore we use the Landau symbol $h(N)=O(H(N))$ to express $|h(N)|\leq CH(N)$ for all $N\in\NN$ with some positive constant $C$ independent of $N$ and a function $H:\NN\to\RR^+$. If the implied constant $C$ depends on some parameters, then these parameters will appear as a subscript in the Landau symbol. A symbol $O$ without a subscript indicates, if nothing else is written, an absolute implied constant.

So far the best known upper bounds for the star discrepancy of concrete examples of point sets  $(\bsz_n)_{0\leq n<N}$ are of the form
$$ND^*_N(\bsz_n)=O(\Log^{s-1} N)$$
where the implied constant might depend on some parameters but is independent of $N$. Examples of such \emph{low-discrepancy point sets} are Hammersley point sets and $(t,m,s)$-nets. A slightly weaker discrepancy bound, i.e. $ND^*_N(\bsz_n)=O(\Log^{s} N)$, holds for good lattice point sets and good polynomial lattice point sets. 

Numerical integration based on low-discrepancy point sets, is well established as \emph{quasi-Monte Carlo} (qMC) method.
The
stochastic counterparts of quasi-Monte Carlo methods, namely \emph{Monte Carlo} (MC)
methods, work with sequences of pseudorandom numbers. For more details on qMC and MC integration and low-discrepancy point sets we refer to \cite{DP10} and \cite{niesiam}.

The potency of qMC methods and MC
methods for multidimensional numerical integration depends on the nature
and the dimensionality of the integrand. As a general rule of thumb, qMC methods are more effective in low dimensions and Monte Carlo methods work reasonably well in arbitrarily high dimensions.
This has led to the idea, first suggested and applied by Spanier \cite{spanier}, of melding the advantages
of qMC methods and MC methods by using so-called \emph{hybrid sequences}. 
The principle here is to sample a relatively small number
of dominating variables of the integrand by low-discrepancy sequences and
the remaining variables by pseudorandom sequences. Application of
hybrid sequences to challenging computational problems can be found in the literature (see e.g. \cite{DelChiLar, Oek2, OekTufBur, spanier}). 

In view of the Koksma--Hlawka inequality the analysis of numerical integration methods based on hybrid sequences requires the study of their discrepancy. 
There are probabilistic results on the discrepancy of hybrid sequences, e.g., in \cite{Gnewuch,Oek1}. Niederreiter \cite{NieAA09} was the first one who established nontrivial deterministic discrepancy
bounds for hybrid sequences, where the qMC components are Halton sequences or Kronecker sequences. Those results where improved, extended, and unified in a series of papers \cite{GomHofNie, Niederreiter10c, Niederreiter10, Niederreiter11b, Niederreiter11, NiederreiterWinterhof11}. In these and in several other papers, see e.g. \cite{DrmotaHoferLarcher,HellekalekKritzer,Hofer09a,HoferKritzer,hklp,hoflar,HoferLarcher,HoferPuchhammer,kritzer,KriPil12}, also hybrid sequences and hybrid point sets made by combining different qMC sequences were treated. The motivation is here to combine the advantages of different qMC point sets and sequences. The challenge is to handle the different structures of the qMC point sets and sequences when studying the discrepancy of such hybrid point sets and hybrid sequences.

In this paper we mention results of Kritzer \cite{kritzer} on hybrid point sets where the components stem from Hammersley point sets on the one hand, and lattice point sets in the sense of Hlawka \cite{hlawka} and Korobov \cite{korobov} on the other hand. 

For the definition of Hammersley point sets we need the radical inverse function $\varphi_{b}:\NN_0\to[0,1)$ where $b$ is a natural number greater or equal to $2$. To compute $\varphi_b(n)$ represent $n$ in base $b$ of the form $n=n_0+n_1b+n_2b^2+\cdots$ with $n_i\in\{0,1,\ldots,b-1\}$ and set $$\varphi_b(n)=\sum_{i=0}^\infty \frac{n_i}{b^{i+1}}.$$
For an $s$-dimensional Halton sequence $(\bsx_n)_{n\geq 0}$ (introduced in \cite{halton}) we choose $s$ pairwise coprime bases $b_1,\ldots,b_s\geq 2$ and set 
$$\bsx_n:=(\varphi_{b_1}(n),\ldots,\varphi_{b_s}(n)).$$
Now for an $(s+1)$-dimensional Hammersley point set we choose in addition a natural number $N$ and define the point set $(\bsy_n)_{0\leq n<N}$
by $$\bsy_n:=(n/N,\varphi_{b_1}(n),\ldots,\varphi_{b_s}(n)).$$

For a $t$-dimensional lattice point set $(\bsy_n)_{0\leq n<N}$ choose first a positive integer $N$ and $t$ integers $g_1,\ldots,g_t$. Then set 
$$\bsy_n:=\left(\left\{ng_1/N\right\},\ldots,\left\{ng_t/N\right\}\right),\, 0\leq n<N.$$
If $(g_1,\ldots,g_s)$ are of the specific form $(g,\ldots,g^{t})$ then we speak of a lattice point set of Korobov type. 

Kritzer ensured existence of lattice point sets and lattice point sets of Korobov type as well, such that they can be combined with Hammersley point sets, and the obtained hybrid point sets satisfy low discrepancy bounds. 

\begin{theorem}[{\cite[Theorem~1]{kritzer} }]
Let $s,t\in\NN$. Let $p_1,\ldots,p_s$ be distinct prime numbers and let $N$ be a prime number that is different from $p_1,\ldots,p_s$. Let $(\bsx_n)_{n\geq 0}$ be the Halton sequence in bases $p_1,\ldots,p_s$. Then there exist generating $g_1,\ldots,g_t\in\{1,\ldots, N-1\}$ such that the point set 
$$\mathcal{S}_N:=(n/N,\bsx_n,\bsy_n)_{0\leq n<N}$$ 
in $[0,1]^{1+s+t}$ with $\bsy_n:=(\{ng_1/N\},\ldots, \{ng_t/N\})$, satisfies 
$$ND^*_N(\mathcal{S}_N)=O(\log^{s+t+1} N)$$
with an implied constant independent of $N$. 
\end{theorem}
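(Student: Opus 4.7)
The natural route is via the Erd\H{o}s--Turán--Koksma inequality, which for any $H\in\NN$ bounds $D_N^*(\mathcal{S}_N)$ in terms of exponential sums
$$S_N(h_0,\bsh,\bsk;\bsg):=\frac{1}{N}\sum_{n=0}^{N-1}\ee^{2\pi\ii(h_0 n/N + h_1\varphi_{p_1}(n)+\cdots+h_s\varphi_{p_s}(n)+(k_1g_1+\cdots+k_tg_t)n/N)}$$
with $(h_0,\bsh,\bsk)\in\ZZ^{1+s+t}\setminus\{\bszero\}$ and $\|(h_0,\bsh,\bsk)\|_\infty\le H$. I would choose $H$ of order $N$ (up to log factors) so that the tail $1/H$ is negligible. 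The task then reduces to controlling the weighted sum $\sum |S_N|/r(h_0,\bsh,\bsk)$ with $r(\bsh)=\prod_i\max(1,|h_i|)$.

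The first main step is to split the summation range according to whether $\bsk=\bszero$ or not. If $\bsk=\bszero$, the sum $S_N$ coincides with the exponential sum attached to the $(s+1)$-dimensional Hammersley point set $(n/N,\varphi_{p_1}(n),\ldots,\varphi_{p_s}(n))$. For this classical object one already knows (via the standard digit-based analysis of Halton/Hammersley sums in distinct prime bases, independently of $\bsg$) that these contributions add up to at most $O(\log^{s+1}N/N)$, which is comfortably below the target bound.

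The second and decisive step handles the contributions with $\bsk\neq\bszero$. Here I would use an averaging (mean-square) argument over $\bsg\in\{1,\dots,N-1\}^t$, combined with the fact that $N$ is prime. Writing $m(\bsk;\bsg):=h_0+k_1g_1+\cdots+k_tg_t \pmod N$, the sum $S_N$ factors as $\frac{1}{N}\sum_n \ee^{2\pi\ii m(\bsk;\bsg)n/N}\cdot\ee^{2\pi\ii(h_1\varphi_{p_1}(n)+\cdots+h_s\varphi_{p_s}(n))}$. Averaging $|S_N|^2$ over $\bsg$ and using primality of $N$ (so that $\bsg\mapsto m(\bsk;\bsg)\bmod N$ is equidistributed for any fixed $\bsk\not\equiv\bszero\pmod N$) collapses the mean into a diagonal sum which can be estimated by Parseval together with the known Halton exponential-sum bounds. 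The resulting mean over $\bsg$ of $\sum_{\bsk\neq\bszero}|S_N|/r(\cdot)$ is of order $\log^{s+t+1}N/N$, whence a specific $\bsg\in\{1,\dots,N-1\}^t$ achieving at most the mean exists.

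The main obstacle will be the second step: in the averaged sum one has to keep track of Halton-type characters $\ee^{2\pi\ii h_j\varphi_{p_j}(n)}$ simultaneously for several indices and separate the diagonal $m(\bsk;\bsg)\equiv 0\pmod N$ from the non-diagonal contribution, all while summing the weights $1/r(h_0,\bsh,\bsk)$ and keeping the polylogarithmic bookkeeping tight. Once one checks that the diagonal collapses to a Hammersley-type sum and the off-diagonal is geometric (of size $1/N\sin(\pi m/N)$) summed against the weights, combining everything via Erd\H{o}s--Turán--Koksma produces the claimed $O(\log^{s+t+1}N)$ bound.
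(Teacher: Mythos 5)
Your plan breaks down at the step you yourself call decisive, and in two distinct ways. First, the mean-square average over the generators is too weak by a factor of $\sqrt{N}$: for a fixed frequency with $\bsk\neq\bszero$ your sum has the shape $T(m)=\frac{1}{N}\sum_{n=0}^{N-1}e(mn/N)\chi(n)$, where $m\equiv h_0+k_1g_1+\cdots+k_tg_t \pmod{N}$ and $\chi(n)=e\big(h_1\varphi_{p_1}(n)+\cdots+h_s\varphi_{p_s}(n)\big)$. Equidistribution of $m$ (primality of $N$) together with Parseval gives $\frac{1}{N}\sum_{m=0}^{N-1}|T(m)|^2=\frac{1}{N}$, hence only $\EE_{\boldsymbol{g}}\,|S_N|\lesssim N^{-1/2}$ per frequency; summing the weights $1/r$ over the range $\|(h_0,\bsh,\bsk)\|_\infty\le H\sim N$ then yields $ND^*_N=O\big(N^{1/2}\log^{s+t+1}N\big)$, far from the target. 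Second, your fallback claim that the off-diagonal contribution is ``geometric of size $1/(N\sin(\pi m/N))$'' silently treats $\chi$ as constant. It is not: $\varphi_{p_j}(n)$ changes its leading digit at every step of $n$, so the total variation of $n\mapsto\chi(n)$ over $0\le n<N$ is $\Theta(N)$, and partial summation does not reduce $T(m)$ to a geometric sum. The frequencies with both $\bsh\neq\bszero$ and $\bsk\neq\bszero$ are precisely the genuinely hybrid exponential sums for which no bounds of the required strength are known; this is the very reason the literature on hybrid sequences avoids applying the exponential Erd\H{o}s--Tur\'an--Koksma inequality across the Halton coordinates. Your first step is also unsupported as stated: the known $O(\log^{s+1}N)$ discrepancy of the Hammersley set does not hand you the weighted exponential-sum bound you invoke, since recovering $|S_N|$ from $D^*_N$ via Koksma's inequality reintroduces a variation factor of order $r(h_0,\bsh)$ that exactly cancels your weights.

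The proof that works --- Kritzer's, mirrored in this paper's polynomial analog in Sections~\ref{sec:3}--\ref{sec:4} --- keeps characters away from the Halton coordinates altogether. One strips off the component $n/N$ by bounding $ND^*_N$ through the maximum over prefixes $\tilde{N}\le N$, splits $\{0,\ldots,\tilde{N}-1\}$ into $O(\log N)$ base-$p$ blocks combined by Lemma~\ref{lem:9}, and converts membership of $\bsx_n$ in an elementary interval into $M=O(\log^s N)$ congruence conditions $n\equiv R\pmod{B}$ with $B$ a product of prime powers (the integer analog of Lemma~\ref{lem:1}). Inside each block and residue class the surviving indices form a structured progression (cf.\ Lemma~\ref{lem:6}), and an Erd\H{o}s--Tur\'an--Koksma-type inequality is applied only to the $t$ lattice coordinates of these points (Lemma~\ref{lem:2} is the Walsh-function version used here). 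The average over all generating tuples then reduces to \emph{exact counting} of solutions of the linear congruence $k_1g_1+\cdots+k_tg_t\equiv c\pmod{N}$ --- the $K_1+K_2$ estimate via Lemma~\ref{lem:4} --- which supplies the $1/N$-scale diagonal factor that your Parseval argument cannot: the averaged discrepancy per residue class is $O(\log^t N)$, and multiplying the three counts ($\log N$ blocks, $\log^s N$ classes, $\log^t N$ per class) gives the claimed $O(\log^{s+t+1}N)$.
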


\begin{theorem}[{\cite[Theorem~3]{kritzer} }]
Let $s,t\in\NN$. Let $p_1,\ldots,p_s$ be distinct prime numbers and let $N$ be a prime number that is different from $p_1,\ldots,p_s$. Let $(\bsx_n)_{n\geq 0}$ be the Halton sequence in bases $p_1,\ldots,p_s$. Then there exists a generating $g\in\{1,\ldots, N-1\}$ such that the point set 
$$\mathcal{S}_N:=(n/N,\bsx_n,\bsy_n)_{0\leq n<N}$$ 
in $[0,1]^{1+s+t}$ with $\bsy_n:=(\{ng/N\},\ldots, \{ng^t/N\})$, satisfies 
$$ND^*_N(\mathcal{S}_N)=O(\log^{s+t+1} N)$$
with an implied constant independent of $N$. 
\end{theorem}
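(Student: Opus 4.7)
My strategy mirrors the averaging argument used in Kritzer's proof of Theorem~1, but it must be adapted to a single generator $g$ whose powers $g,g^2,\ldots,g^t$ appear simultaneously. I would begin by estimating $D^*_N(\mathcal{S}_N)$ via an Erd\H{o}s--Tur\'an--Koksma type inequality, which bounds $ND^*_N$ by a weighted sum over nonzero integer frequency vectors $(h_0,\bsh,\bsk)\in\ZZ\times\ZZ^s\times\ZZ^t$ (truncated to an appropriate range) of the character sums
$$S_{h_0,\bsh,\bsk}(g):=\frac{1}{N}\sum_{n=0}^{N-1}\ee^{2\pi\ii h_0 n/N}\prod_{j=1}^{s}\ee^{2\pi\ii h_j\varphi_{p_j}(n)}\ee^{2\pi\ii(k_1g+\cdots+k_tg^t)n/N}.$$
The pivotal observation is that the first and last exponentials merge, since both are additive characters on $\ZZ/N\ZZ$ evaluated at $n$; thus $S_{h_0,\bsh,\bsk}(g)$ depends on $g$ only via the residue
$$m_{h_0,\bsk}(g):=h_0+k_1g+\cdots+k_tg^t\pmod{N}.$$

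\medskip

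Next I would split the analysis according to whether $m_{h_0,\bsk}(g)\equiv 0\pmod{N}$ or not. Whenever at least one of $k_1,\ldots,k_t$ is nonzero modulo $N$, the polynomial $h_0+k_1X+\cdots+k_tX^t$ over $\FF_N$ is not identically zero, hence has at most $t$ roots in $\FF_N$. For such bad $g$ the character sum collapses to a pure Halton-type exponential sum, which I would estimate using the classical bounds on Halton sequence character sums already exploited in \cite{NieAA09,kritzer}. For the remaining $g$ the factor $\ee^{2\pi\ii m_{h_0,\bsk}(g)n/N}$ is a nontrivial additive character of $\ZZ/N\ZZ$, and the associated mixed sums can be handled by precisely the estimates Kritzer developed in the proof of Theorem~1.

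\medskip

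I would then run an averaging argument: sum the resulting bounds on $|S_{h_0,\bsh,\bsk}(g)|$ over $g\in\{1,\ldots,N-1\}$, interchange with the sum over $(h_0,\bsh,\bsk)$, and combine the root-counting estimate above with the Halton exponential sum bounds to obtain an average of order $\log^{s+t+1} N$. A standard pigeonhole argument then produces a specific $g\in\{1,\ldots,N-1\}$ with $ND^*_N(\mathcal{S}_N)=O(\log^{s+t+1}N)$, as required.

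\medskip

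The main obstacle, compared with Theorem~1, lies in this averaging step. In the setting of Theorem~1 the generators $g_1,\ldots,g_t$ are handled coordinate by coordinate, since they occur in independent lattice coordinates and the average factorises. Here all $t$ lattice coordinates are coupled through powers of a single $g$, and the root-counting bound $|\{g\in\FF_N^*:m_{h_0,\bsk}(g)\equiv 0\pmod{N}\}|\leq t$ must be combined carefully with the sum over $(h_0,\bsh,\bsk)$ to deliver precisely $\log^{s+t+1}N$ rather than a worse power of $\log N$. Balancing the roles of $h_0$ and $\bsk$, since either can absorb a shift in $m_{h_0,\bsk}(g)$, will require the most careful bookkeeping.
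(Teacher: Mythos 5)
There is a genuine gap, and it sits exactly where your plan delegates the hard work. Note first that the paper does not prove this statement itself: it is quoted from \cite[Theorem~3]{kritzer}, and the paper's own proof machinery (Sections~3--4) is for the polynomial analogs, Theorems~\ref{thm:1} and \ref{thm:2}. Neither Kritzer's proof nor its analog here runs a global Erd\H{o}s--Tur\'an--Koksma inequality with exponential characters over the Halton coordinates, and for a structural reason your plan cannot survive: a Halton character sum $\frac1N\sum_{n<N}\ee^{2\pi\ii h_j\varphi_{p_j}(n)}$ never vanishes exactly, so in the ETK frame you would need \emph{uniform} bounds of order $\mathrm{polylog}(N)/N$ on the hybrid sums $S_{h_0,\bsh,\bsk}(g)$ for all nonzero Halton frequency vectors $\bsh$ with $|h_j|\le N$. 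No such bounds exist, and they are not in \cite{NieAA09} or \cite{kritzer}: Niederreiter's hybrid exponential-sum estimates give only power savings (discrepancy of order $N^{-\delta}$, far above $\log^{s+t+1}N/N$), and even the best one can squeeze from Koksma's inequality for a single pure Halton sum is of size $\prod_j|h_j|\cdot\log^s N/N$, which after summation against the ETK weights $1/r(\bsh)$ over the truncated frequency box produces a bound that is polynomial in $N$, not polylogarithmic. So both of your case branches ($m_{h_0,\bsk}(g)\equiv 0$ collapsing to ``a pure Halton-type exponential sum'', and $m\not\equiv 0$ handled by ``the estimates Kritzer developed'') rest on estimates that do not exist at the required strength.

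The missing idea is the residue-class reduction that both Kritzer and this paper use (here Lemma~\ref{lem:1} and its consequences in Section~\ref{sec:3}): membership of $\bsx_n$ in an elementary Halton interval is \emph{exactly} equivalent to $n$ lying in one of $O(\log^s N)$ residue classes $n\equiv R \pmod{B}$, and along such an arithmetic progression the Korobov lattice points $\bsy_n$ form a shifted lattice point set again. Only for genuine lattice point sets do the character (or, in the paper, Walsh) sums vanish identically off a dual set --- in the paper this is the dichotomy where $\bigl|\sum_{l=0}^{p^d-1}\wal_{\bsk}(\bsy_{n_l})\bigr|$ equals $p^d$ or $0$ --- and that exact vanishing is what makes the averaging over $g$ reduce to a counting problem delivering $O(\log^{s+t+1}N)$. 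Your root-counting observation, that $h_0+k_1g+\cdots+k_tg^t$ has at most $t$ zeros modulo the prime $N$, is indeed the correct kernel of the Korobov step (it is precisely the analog of $K_1\le t$ and $K_2\le t(p^{m-d}-1)$ in the paper's proof of Theorem~\ref{thm:2}), but you have grafted it onto an outer frame that cannot reach polylog. One further structural mismatch: in the correct argument the first coordinate $n/N$ is not an ETK frequency $h_0$ merging with the lattice characters; it is removed at the outset via $ND^*_N(\bsz_n)\le\max_{1\le\tilde N\le N}\tilde N D^*_{\tilde N}((\bsx_n,\bsy_n))+1$ (see \cite[Lemma~3.7]{niesiam}), after which $\{0,\ldots,\tilde N-1\}$ is split into digit blocks, and the shift enters through the residue $R(X)$, not through $h_0$.
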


Kritzer used a slightly different lattice point set of Korobov type by setting $(g_1,\ldots,g_t)=(g,\ldots,g^t)$ instead of $(1,g,\ldots, g^{t-1})$. Note that $g_1=1$ won't mix well with the first component $n/N$. 

In the next section we will define the analogs to Hammersley point sets and lattice point sets that are using the arithmetics in the ring of polynomials and the field of rational functions over a finite field instead of the arithmetic in the ring of integers and the field of rational numbers, before we state two theorems that represent analogs to the two theorems of Kritzer.

\section{Halton-type Hammersley point sets, polynomial lattice point sets, and results on the star discrepancy of their hybrid point sets}

Let $p$ be a prime number. Let $\FF_p$ be the finite field with $p$ elements. Let $\FF_p[X]$ be the ring of polynomials over $\FF_p$, $\FF_p(X)$ the field of rational functions over $\FF_p$, and $\FF_p((X^{-1}))$ the field of formal Laurent series over $\FF_p$.

Let $s\in\NN$, and let $b_1(X)\ldots,b_s(X)$ be distinct monic pairwise coprime nonconstant polynomials over $\FF_p$ with degrees $e_1,\ldots,e_s$. We define the Halton type sequence $(\bsx_n)_{n\geq 0}$ in bases $(b_1(X),\ldots,b_s(X))$ by 
$$\bsx_n:=(\varphi_{b_1(X)}(n(X)),\ldots,\varphi_{b_s(X)}(n(X))).$$
Here $\varphi_{b(X)}(n(X))$ is the radical inverse function in the ring $\FF_p[X]$ defined as follows. Expand $n$ in base $p$, $n=n_0+n_1p+n_2p^2+\cdots$ with $n_i\in\{0,1,\ldots,p-1\}$ and associate the
 polynomial $n(X)=n_0X^0+n_1X+n_2X^2+\cdots$ where we do not distinguish between the set $\FF_p$ and the set $\{0,1,\ldots,p-1\}$. Now expand $n(X)$ in base $b(X)$ with $\deg(b(X))=e\geq 1$ as 
 $$n(X)=\rho_0(X)b^0(X)+\rho_1(X)b^1(X)+\rho_2(X)b^2(X)+\cdots$$
 with $\deg(\rho_j(X))<e$ for $j\in\NN_0$. Finally, define a bijection $$\sigma:\{\rho(X)\in\FF_p[X]:\deg(\rho(X))<e\}\to \{0,1,\ldots,p^e-1\}$$ and set $$\varphi_{b(X)}(n(X)):=\sum_{j=0}^\infty \frac{\sigma(\rho_j(X))}{p^{e(j+1)}}.$$
 To avoid technical effort we restrict to bijections $\sigma$ that are mapping $0$ to $0$.

Let $m \in \NN$. Using the Halton type sequence in bases $b_1(X)\ldots,b_s(X)$ we can define a $(s+1)$-dimensional Halton-type Hammersley point set of $N=p^m$ points by using the $n$th point of the form 
$$\left(\frac{n}{N},\bsx_n\right)$$
and letting $n$ range in $\{0,1,\ldots,N-1\}$. 

For the definition of polynomial lattice point sets we identify $\FF_p$ again with the set $\{0,1,\ldots,p-1\}$. 

Let $t,\,m\in\NN$. Let $p(X)\in\FF_p[X]$ be irreducible, monic, and with degree $m$. Furthermore, let $\bsq(X)=(q_1(X),\ldots,q_t(X))\in\FF^t_p[X]$. The $i$th component $y^{(i)}_n$ of the $n$th point $\bsy_n$ is computed as follows. 
Expand $\{{n(X)q_i(X)/p(X)}\}$ in its formal Laurent series 
$$\left\{\frac{n(X)q_i(X)}{p(X)}\right\}=\sum_{j=1}^\infty u_jX^{-j}$$
and evaluate it by exchanging $X$ with $p$ and summing up to the index $m$. Hence 
$$y_n^{(i)}=\sum_{j=1}^m u_jp^{-j}.$$

We can also compute the $i$th component $y_n^{(i)}$ of the $n$th point $\bsy_n$ by using the base $p$ representation of $n=\sum_{j=0}^\infty n_jp^j$ and a generating matrix $C_i\in\FF_p^{m\times m}$. Let $\sum_{j=1}^\infty a_jX^{-j}$ be the formal Laurent series of $\left\{\frac{q_i(X)}{p(X)}\right\}$. Define
$$C_i:=\begin{pmatrix} a_1&a_2&\cdots&a_m\\
a_2&a_3&\cdots&a_{m+1}\\
\vdots&\vdots&\cdots &\vdots\\
a_m&a_{m+1}&\cdots &a_{2m-1}\end{pmatrix}.$$
Compute $C_i\cdot(n_0,n_1,\ldots,n_{m-1})^T=(u_1,u_2,\ldots,u_m)^T\in\FF_q^{m}$
and set $$y_n^{(i)}=\sum_{j=1}^mu_jp^{-j}.$$

Finally letting $n$ range in the set $\{0,1,\ldots,p^m-1\}$ we obtain the polynomial lattice point set $\mathcal{P}(\bsq(X),p(X))=\{\bsy_0,\bsy_1,\ldots,\bsy_{p^m-1}\}\subset [0,1]^t$. 

If we choose $\bsq(X)$ of the specific form $(g(X),g^2(X),\ldots,g^{t}(X))$ with $g(X)\in\FF_p[X]$, we will speak of a polynomial lattice point set of Korobov type abbreviated to $\mathcal{K}(t,g(X),p(X))$. 

In the following two theorems we ensure existence of polynomial lattice point sets as well as polynomial lattice point sets of Korobov type, such that they can be combined with a Halton-type Hammersley point set and result in hybrid point sets satisfying low discrepancy bounds. Theorem~\ref{thm:1} represents an analog to \cite[Theorem~1]{kritzer} and Theorem~\ref{thm:2} is the pendant to \cite[Theorem~3]{kritzer}.

\begin{theorem}\label{thm:1}
Let $s,t\in\NN$ and $p\in\PP$, let $b_1(X),\ldots,b_s(X)$ be monic pairwise coprime nonconstant polynomials in $\FF_p[X]$ and $(\bsx_n)_{n\geq 0}$ be a Halton type sequence in bases $(b_1(X),\ldots,b_s(X))$. Furthermore, let $p(X)$ be a monic, irreducible polynomial in $\FF_p[X]$ of degree $m$, coprime with all base polynomials of the Halton-type sequence, and set $N=p^m$.  
Then there exists a $t$-tuple of polynomials $\bsq(X)\in\FF^t_p(X)$ with degrees $<m$ such that the star discrepancy $D^*_N$ of the point set $(n/p^m,\bsx_n,\bsy_n)_{0\leq n<p^m}\in[0,1]^{s+t+1}$ satisfies 
$$ND_N^*(n/p^m,\bsx_n,\bsy_n)=O_{b_1(X),\ldots,b_s(X),p,t}(\Log^{s+t+1} N).$$
Here $\{\bsy_0,\bsy_1,\ldots,\bsy_{p^m-1}\}$ is the polynomial lattice point set $\mathcal{P}(\bsq(X),p(X))$. 
\end{theorem}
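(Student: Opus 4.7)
The plan is to combine a hybrid Erd\H{o}s--Tur\'an--Koksma-type inequality tailored to the three-block structure of the point set with an averaging argument over $\bsq(X)$. The point set $(n/N,\bsx_n,\bsy_n)$ involves three rather different character systems: Weyl exponentials $\ee^{2\pi\ii hn/N}$ for the coordinate $n/N$, $b_i(X)$-adic characters on the Halton-type coordinates $\bsx_n$, and Walsh characters over $\FF_p$ for the polynomial lattice coordinates $\bsy_n$. A hybrid inequality of the kind developed in \cite{HoferLarcher,Hofer09a} should give, with a cutoff $M$ comparable to $N$,
\[
N D_N^*(n/N,\bsx_n,\bsy_n) \;\leq\; \frac{N}{M+1} + C\sum_{(h,\bsk,\bsl)}^{*}\frac{1}{r(h)\,r_{\bsb}(\bsk)\,r(\bsl)}\left|\sum_{n=0}^{N-1}\chi_{h,\bsk,\bsl}(n)\right|,
\]
where the outer sum ranges over nontrivial triples with appropriately truncated indices, $r(\cdot)$ and $r_{\bsb}(\cdot)$ are the standard Erd\H{o}s--Tur\'an weights for the respective systems, and $\chi_{h,\bsk,\bsl}$ is the product of the three types of characters evaluated at $(n/N,\bsx_n,\bsy_n)$.

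The next step is to split the outer sum according to whether $\bsl=\bszero$ or not. When $\bsl=\bszero$, the polynomial lattice factor disappears and the remaining contribution is controlled by the star discrepancy of the Halton-type Hammersley point set $(n/N,\bsx_n)$, which is known to be $O_{b_1(X),\ldots,b_s(X),p}(\Log^{s+1}N)$. For the range $\bsl\neq\bszero$, I would carry out an averaging argument over all admissible $\bsq(X)\in\FF_p^t[X]$ with $\deg q_i<m$. Exploiting orthogonality of Walsh characters associated with $\mathcal{P}(\bsq(X),p(X))$, the mean over $\bsq$ of the squared modulus of the inner character sum simplifies: only terms where $\bsl$ lies in the appropriate ``dual'' set defined by $p(X),\,h,\,\bsk$ survive. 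Pairing this with summability of the weights $1/r(h)r_{\bsb}(\bsk)r(\bsl)$, which yields one $\Log N$ factor per coordinate block, the mean over $\bsq$ of the weighted bad sum comes out to $O_{b_1(X),\ldots,b_s(X),p,t}(\Log^{s+t+1}N)$. By pigeonhole at least one choice of $\bsq$ realizes this bound, and the theorem follows.

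The main technical obstacle is setting up the hybrid Erd\H{o}s--Tur\'an--Koksma inequality so that the three heterogeneous character systems are handled simultaneously and without extraneous logarithmic losses. The Weyl characters for $n/N$ and the Walsh characters for $\bsy_n$ both ultimately reflect the base-$p$ digits of $n$ via different orthonormal systems on $[0,1)$, so either one absorbs the $n/N$ coordinate into an extended digital structure (noting that $n/N$ with $N=p^m$ is itself the van der Corput-type digital coordinate generated by a simple matrix over $\FF_p$, so that $(n/N,\bsy_n)$ is a $(t+1)$-dimensional digital net built from $p(X)$ and $\bsq(X)$) and is then left only with a Halton-type times polynomial lattice hybrid to analyse, or one keeps the two systems separate and pays a Koksma-type comparison cost on the first coordinate. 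Either route reduces the problem to a framework treated in the literature on hybrids of Halton-type sequences and digital nets, after which the averaging step is a routine polynomial lattice computation and careful bookkeeping of the $\Log$ factors contributed by each of the $1+s+t$ coordinate blocks produces the stated exponent $s+t+1$.
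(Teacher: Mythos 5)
Your overall skeleton (an Erd\H{o}s--Tur\'an--Koksma-type inequality, averaging over $\bsq(X)$, pigeonhole) matches the paper's, but your primary route through a three-system hybrid inequality has two genuine gaps. First, there is no usable character system for the Halton-type coordinates in this setting: the radical inverse $\varphi_{b_i(X)}$ is built with an \emph{arbitrary} bijection $\sigma$ on digit blocks (only $\sigma(0)=0$ is assumed), which destroys $\FF_p$-linearity in the digits of $n$, so ``$b_i(X)$-adic characters'' with exploitable orthogonality are not available off the shelf, and the references you invoke do not supply an inequality covering Halton-type (polynomial) times polynomial-lattice hybrids. The paper instead treats the Halton block purely combinatorially: by Lemma~\ref{lem:1}, membership of $\bsx_n$ in an elementary interval is equivalent to $n(X)$ lying in one of $M=O(\log^s N)$ residue classes modulo $b_1^{j_1}(X)\cdots b_s^{j_s}(X)$, which is robust under any $\sigma$. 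Second, and more fundamentally, your averaging step fails quantitatively. You average the \emph{squared} modulus of the character sum over $\bsq$, but the discrepancy bound needs the mean of the modulus; once a nonzero Weyl frequency $h$ or a Halton frequency is present, the hybrid sum $\sum_{n<N} e(hn/N)\cdots\wal_{\bsl}(\bsy_n)$ is no longer of the all-or-nothing type (diagonal terms alone force the mean square to be of size at least $N$), so Cauchy--Schwarz yields root-mean-square bounds of order roughly $\sqrt{N}$ times powers of $\Log N$ --- a metrical-type estimate, not the claimed $\Log^{s+t+1}N$. Note that in Kritzer's integer setting the coordinate $n/N$ and the lattice coordinates use the \emph{same} rational exponentials and merge into a single character, whereas here $e(hn/N)$ is a character of $\ZZ/p^m\ZZ$ while $\wal_{\bsl}(\bsy_n)$ is one of $\FF_p^m$, and they do not combine. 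The paper sidesteps both problems: it removes the coordinate $n/N$ at the outset via the initial-segment inequality $ND^*_N(\bsz_n)\leq\max_{1\leq\tilde N\leq N}\tilde N D^*_{\tilde N}((\bsx_n,\bsy_n))+1$ from \cite[Lemma~3.7]{niesiam}, decomposes $\{0,\ldots,\tilde N-1\}$ into at most $pm$ $p$-adic blocks, and restricts to one residue class at a time; on such a coset (described by Lemma~\ref{lem:6}) the points $\bsy_n$ form a shifted digital structure whose Walsh sums are \emph{exactly} $p^d$ or $0$, the dichotomy being governed by $\nu(\{\bsk(X)B(X)\bsq(X)/p(X)\})<-d$, so the average over $\bsq$ reduces to counting bad $\bsq$ via Lemma~\ref{lem:4}, with no Cauchy--Schwarz loss.

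Your fallback remark is, by contrast, sound and close to salvageable: since $n/p^m$ is the digital coordinate generated by the antidiagonal matrix, $(n/N,\bsy_n)$ is indeed a $(t+1)$-dimensional digital net over $\FF_p$, and combining this observation with the residue-class treatment of the Halton block would yield a workable variant of the paper's argument. But as written you defer exactly the hard part to ``a framework treated in the literature,'' and no ready-made framework exists for this pairing: constructing it --- Lemma~\ref{lem:1}, the block decomposition with Lemma~\ref{lem:9}, the coset structure of Lemma~\ref{lem:6}, Hellekalek's inequality (Lemma~\ref{lem:2}), the equivalence between the linear-algebra dual condition and the valuation condition, and the counting via Lemma~\ref{lem:4} --- is precisely the substance of Section~\ref{sec:3}. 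Your $\bsl=\bszero$ reduction to the known $O(\Log^{s+1}N)$ discrepancy of the Halton-type Hammersley set is correct but plays no separate role in the paper's organization.
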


\begin{theorem}\label{thm:2}
Let $s,t\in\NN$ and $p\in\PP$, let $b_1(X),\ldots,b_s(X)$ be monic pairwise coprime nonconstant polynomials in $\FF_p[X]$ and $(\bsx_n)_{n\geq 0}$ be a Halton type sequence in bases $(b_1(X),\ldots,b_s(X))$. Furthermore, let $p(X)$ be a monic, irreducible polynomial in $\FF_p[X]$ of degree $m$, coprime with all base polynomials of the Halton-type sequence, and set $N=p^m$. 
Then there exists a polynomial $g(X)$ over $\FF_p$ with degree $<m$ such that the star discrepancy $D^*_N$ of the point set $(n/p^m,\bsx_n,\bsy_n)_{0\leq n<p^m}\in[0,1]^{s+t+1}$ satisfies 
$$ND^*_N(n/p^m,\bsx_n,\bsy_n)=O_{b_1(X),\ldots,b_s(X),p,t}(\Log^{s+t+1} N).$$
Here $\{\bsy_0,\bsy_1,\ldots,\bsy_{p^m-1}\}$ is the polynomial lattice point set of Korobov type $\mathcal{K}(t,g(X),p(X))$. 
\end{theorem}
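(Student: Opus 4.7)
The plan is to transcribe Kritzer's proof of his \cite[Theorem~3]{kritzer} into the polynomial/rational-function setting, exploiting the coprimality of $p(X)$ with the Halton-type bases $b_1(X),\ldots,b_s(X)$ to render the three character systems of the hybrid point set largely independent. First, I would apply an Erd\H{o}s--Tur\'an--Koksma inequality for the hybrid star discrepancy: writing the frequency vector as $\bsh=(h_0,\bsh_1,\bsh_2)\in\ZZ\times\ZZ^s\times\ZZ^t$, this reduces the bound on $ND_N^*$ to a main term (handled by choosing a truncation parameter $H$ of order $N$, which produces the factor $\Log^{s+t+1}N$) plus a weighted exponential sum
$$\sum_{\bsh\neq\bszero}\frac{1}{r(\bsh)}\,\left|\sum_{n=0}^{N-1}e^{2\pi\ii(h_0 n/N+\bsh_1\cdot\bsx_n+\bsh_2\cdot\bsy_n)}\right|,$$
where $r(\bsh)=\prod_j\max(1,|h_j|)$.

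Next, I would decompose the inner exponential sum according to the three blocks of coordinates. Because $p(X)$ is coprime to every $b_i(X)$, while $N=p^m$ is tied to $p(X)$ and the Halton-type coordinates are governed by the digital scales of the $b_i(X)$, the joint character essentially factors. For the Halton-type block I would invoke digit-by-digit character sum estimates for Halton-type sequences (in the spirit of Atanassov, Faure, and Niederreiter), losing only a logarithmic factor per base. For the $(h_0,\bsh_2)$-block the standard theory of polynomial lattice exponential sums applies: the inner sum is of full size $N$ exactly when the dual alignment condition
$$h_0 \;\equiv\; -\sum_{i=1}^{t} h_{2,i}\,g^i(X) \pmod{p(X)}$$
holds (after translating integers modulo $p^m$ into polynomial residues modulo $p(X)$ in the usual digital way), and is otherwise controlled by an elementary geometric-sum estimate.

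Then I would average over $g(X)$ running through $\FF_p[X]/(p(X))\setminus\{0\}$. Since $p(X)$ is irreducible, the alignment equation above is a polynomial equation in $g(X)$ of degree at most $t$ over the field $\FF_p[X]/(p(X))$ and therefore has at most $t$ roots. Summing the weights $1/r(\bsh)$ over the corresponding frequency vectors and combining with the Halton-type estimates yields, via the classical Zaremba-type calculation, an averaged bound of order $\Log^{s+t+1}N$. By pigeonhole there exists at least one $g(X)$ for which the full weighted sum is of the claimed size, which through the Erd\H{o}s--Tur\'an--Koksma inequality gives Theorem~\ref{thm:2}.

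The step I expect to be the most delicate is the decoupling of the inner character sum into a Halton-type factor and a polynomial-lattice factor, up to admissible error. The two pieces naturally live on digital scales attached to different polynomials, and the coprimality hypothesis has to be used to show that no destructive interaction occurs between them. Once this factorization is in place, the averaging argument is essentially the one for standard polynomial Korobov lattices, and combining the resulting bounds gives the desired low discrepancy.
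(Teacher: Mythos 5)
There is a genuine gap, and it sits exactly at the step you yourself flag as delicate: the decoupling of the hybrid character sum. Your plan runs the classical trigonometric Erd\H{o}s--Tur\'an--Koksma inequality over all $1+s+t$ coordinates, but trigonometric characters are mismatched to both digital blocks. For the polynomial lattice block, the sum $\sum_{n}e^{2\pi\ii\,\bsh_2\cdot\bsy_n}$ admits no full-size-or-zero dual dichotomy: the additive structure underlying $\bsy_n$ is the elementary abelian group $\FF_p^m\cong\FF_p[X]/(p(X))$, whereas $e^{2\pi\ii h_0n/p^m}$ is a character of the cyclic group $\ZZ/p^m\ZZ$; for $m>1$ these groups are not isomorphic, so your alignment congruence $h_0\equiv-\sum_{i}h_{2,i}\,g^i(X)\pmod{p(X)}$ is not meaningful --- there is no character-compatible way of ``translating integers modulo $p^m$ into polynomial residues modulo $p(X)$''. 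The clean dual description holds for Walsh characters in base $p$, which is why the paper works with Hellekalek's Walsh-based inequality (Lemma~\ref{lem:2}) rather than the trigonometric one. For the Halton-type block the situation is worse: these sequences are built with arbitrary digit bijections $\sigma$ (only $\sigma(0)=0$ is assumed), so the Atanassov/Niederreiter-style exponential sum estimates you invoke simply do not exist in this generality, and no appeal to coprimality will factor the joint character ``up to admissible error''.

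The paper avoids character sums on the Halton-type coordinates altogether. It first absorbs the component $n/N$ by passing to $\max_{1\le\tilde N\le N}\tilde N D^*_{\tilde N}$ and splitting $\{0,\ldots,\tilde N-1\}$ into $O(p\log_p N)$ $p$-adic blocks (Lemma~\ref{lem:9}); it then conditions exactly on the Halton-type coordinates: by Lemma~\ref{lem:1} an elementary box in those coordinates corresponds to a disjoint union of $M=O(\log^s N)$ residue classes $n(X)\equiv R_k(X)\pmod{B_k(X)}$, and by Lemma~\ref{lem:6} the integers of a block lying in one residue class map, under the lattice construction, to a shifted digital subset of $\mathcal{K}(t,g(X),p(X))$ whose Walsh sums obey the exact dual criterion $\nu\left(\left\{\bsk(X)B(X)\bsq(X)/p(X)\right\}\right)<-d$ with $\bsq(X)=(g(X),\ldots,g^t(X))$. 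Only at that point does your averaging idea enter, and there your instinct is correct and coincides with the paper: since $p(X)$ is irreducible, $k_1(X)Y+\cdots+k_t(X)Y^t\equiv b(X)\pmod{p(X)}$ has at most $t$ roots in $Y$, which together with Lemma~\ref{lem:4} gives $K_1\le t$ and $K_2\le t(p^{m-d}-1)$, and pigeonhole finishes. So the back end of your proposal is sound, but the front end --- trigonometric Erd\H{o}s--Tur\'an--Koksma, approximate factorization, and the nonexistent Halton-type character estimates --- would fail and must be replaced by the exact conditioning argument via Lemmas~\ref{lem:1} and~\ref{lem:6}.
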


The rest of the paper is organized as follows. Section~\ref{sec:2} collects auxiliary results needed for the proofs of Theorem~\ref{thm:1} and Theorem~\ref{thm:2}, which are formulated in Section~\ref{sec:3} and Section~\ref{sec:4}.

\section{Auxiliary results}\label{sec:2}

From the construction of the Halton-type sequence we immediately obtain the following lemma. 

\begin{lemma}\label{lem:1}
Let $(\bsx_n)_{n\geq 0}$ be a Halton type sequence in pairwise coprime bases $b_1(X),\ldots,b_s(X)$ with degrees $e_1,\ldots,e_s$, and let 
$$I:=\prod_{i=1}^s\left[\frac{a_i}{p^{e_il_i}},\frac{a_i+1}{p^{e_il_i}}\right)$$
with $l_i\geq 0$, $0\leq a_i<p^{e_il_i}$ for $i=1,\ldots,s$. Then $\bsx_n\in I$ if and only if $$n(X)\equiv R(X)\pmod{ {\prod_{i=1}^{s}{b_i^{l_i}(X)}}}$$
where the $R(X)$ depends on the $a_i$ and $\deg(R(X))<\sum_{i=1}^se_il_i$. Furthermore there is a one-to-one correspondence between all possible choices for $a_1,\ldots,a_s$ and $R(X)$. 
\end{lemma}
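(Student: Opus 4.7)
The plan is to handle the statement one coordinate at a time and then combine the $s$ congruences via the Chinese Remainder Theorem in $\FF_p[X]$.

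First I would fix an index $i\in\{1,\ldots,s\}$ and analyze the single condition
$\varphi_{b_i(X)}(n(X))\in[a_i/p^{e_il_i},(a_i+1)/p^{e_il_i})$. Writing $a_i$ in base $p^{e_i}$, say $a_i=\sum_{j=0}^{l_i-1}c_j^{(i)}p^{e_ij}$ with $0\le c_j^{(i)}<p^{e_i}$, the condition is precisely that the first $l_i$ base-$p^{e_i}$ digits of $\varphi_{b_i(X)}(n(X))$ coincide with $c_0^{(i)},\ldots,c_{l_i-1}^{(i)}$. From the definition $\varphi_{b_i(X)}(n(X))=\sum_{j\ge 0}\sigma(\rho_j^{(i)}(X))p^{-e_i(j+1)}$ and the fact that $\sigma$ is a bijection mapping $0$ to $0$ (so no base-$p^{e_i}$ digit equals $p^{e_i}-1$ in a sustained way that would cause ambiguity of representation at the endpoint), the condition becomes
$\sigma(\rho_j^{(i)}(X))=c_j^{(i)}$ for $j=0,\ldots,l_i-1$, i.e.\ $\rho_j^{(i)}(X)=\sigma^{-1}(c_j^{(i)})$ for those $j$. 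Setting $R_i(X):=\sum_{j=0}^{l_i-1}\sigma^{-1}(c_j^{(i)})\,b_i^j(X)$, which has degree $<e_il_i$, this is equivalent to
$n(X)\equiv R_i(X)\pmod{b_i^{l_i}(X)}$, since $R_i(X)$ captures precisely the low-order coefficients in the base-$b_i(X)$ expansion of $n(X)$.

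Next I would assemble the $s$ per-coordinate equivalences. Since $b_1(X),\ldots,b_s(X)$ are pairwise coprime, so are the powers $b_1^{l_1}(X),\ldots,b_s^{l_s}(X)$. By the Chinese Remainder Theorem in $\FF_p[X]$, the simultaneous system $n(X)\equiv R_i(X)\pmod{b_i^{l_i}(X)}$ for $i=1,\ldots,s$ is equivalent to a single congruence
$n(X)\equiv R(X)\pmod{\prod_{i=1}^s b_i^{l_i}(X)}$, where $R(X)$ is the unique representative of degree $<\sum_{i=1}^s e_il_i=\deg\!\bigl(\prod_{i=1}^s b_i^{l_i}(X)\bigr)$. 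By construction $R(X)$ depends only on $(a_1,\ldots,a_s)$.

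For the one-to-one correspondence I would count: the map $(a_1,\ldots,a_s)\mapsto(R_1(X),\ldots,R_s(X))$ is a bijection because $\sigma$ is a bijection and the base-$p^{e_i}$ representation of $a_i$ is unique, and the map $(R_1(X),\ldots,R_s(X))\mapsto R(X)$ is a bijection by CRT. Both sides have cardinality $\prod_{i=1}^s p^{e_il_i}=p^{\sum_i e_il_i}$, which matches the number of polynomials in $\FF_p[X]$ of degree $<\sum_i e_il_i$, confirming the bijection.

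The only delicate point I anticipate is the first step, namely translating the half-open interval condition in $[0,1)$ into an exact prescription of digits; one must be careful that the coefficients $\rho_j^{(i)}(X)$ need not fill their full range (for instance when $b_i(X)$ is not a power of $X$, not every polynomial of degree $<e_i$ arises as a leading coefficient at some position, but every one does at every fixed position, which is what matters here), and that the half-open nature of the interval is consistent with the convention that the base-$p$ expansion used in $\varphi_{b_i(X)}$ terminates. Apart from this bookkeeping, the lemma is an immediate CRT reformulation of the definition.
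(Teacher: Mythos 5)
Your proof is correct: the per-coordinate digit analysis (valid because the base-$b_i(X)$ expansion of $n(X)$ terminates and $\sigma$ fixes $0$, so the digit tail contributes strictly less than $p^{-e_il_i}$ and the half-open interval condition pins down exactly the first $l_i$ base-$p^{e_i}$ digits), combined with the Chinese Remainder Theorem in $\FF_p[X]$ applied to the pairwise coprime moduli $b_i^{l_i}(X)$, is exactly the intended argument. The paper states Lemma~\ref{lem:1} without proof, as immediate from the construction of the Halton-type sequence, and your write-up supplies precisely that omitted standard argument, including the correct counting $\prod_{i=1}^s p^{e_il_i}=p^{\sum_i e_il_i}$ for the one-to-one correspondence.
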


Let $e(x):=\exp(2\pi \sqrt{-1} x)$ for $x\in\RR$. 
We define the $\bsk$th Walsh function $\wal_{\bsk}$ in base $p$ on $[0,1)^t$ as follows. Let $\Phi_0:\{0,1,\ldots,p-1\}\to \{z\in\CC:|z|=1\},\,a\mapsto e(a/p)$. Note that for a given $b\in\{0,1\ldots, p-1\}$ we have that $\sum_{a=0}^{p-1}(\Phi_0(a))^b$ equals $p$ if $b=0$ and $0$ else. 

The $k$th Walsh function $\wal_k,$ for $k\geq 0$, to the base $p$ is defined by 
$$\wal_k(x):=\prod_{j=0}^\infty(\Phi_0(x_j))^{k_j}$$
where $x=x_0x_1\ldots$ is the base $p$ expansion of $x\in[0,1)$ and $k=\sum_{j=0}^\infty k_jp^j$ is the base $p$ expansion of $k\in\NN_0$. 
For vectors $\bsk=(k_1,\ldots,k_t)\in\NN_0^t$ and $\bsx=(x_1,\ldots,x_t)\in[0,1)^t$ the Walsh function $\wal_{\bsk}$ on $[0,1)^t$ denotes $$\wal_{\bsk}(\bsx):=\prod_{i=1}^t\wal_{k_i}(x_i).$$

\begin{lemma}[{\cite[Theorem~1]{hell}}]\label{lem:2}
Let $\mathcal{P}=\{\bsy_0,\bsy_1,\ldots,\bsy_{N-1}\}$ be a finite point set in $[0,1)^t$ with $\bsy_n$ of the form $\bsy_n=\{\bsw_n/M\},\, \bsw_n\in\ZZ^t$. Suppose that $M=p^m$, where $m$ is positive integer. Then the following estimate holds:
$$D^*_N(\bsy_n)\leq \underbrace{1-(1-1/M)^t}_{\leq t/M}+\sum_{\bsk\in\Delta_m^*}\rho_{\wal}(\bsk)|S_N(\wal_{\bsk})|,$$
where 
$$ S_N(\wal_{\bsk}):=\frac{1}{N}\sum_{n=0}^{N-1}\wal_{\bsk}(\bsy_n),$$
$$\Delta_m:=\{\bsk\in\ZZ^t:0\leq k_i<p^m,\,\mbox{ for }i=1,\ldots,t\}$$
$\Delta^*_m=\Delta_m\setminus\{\bszero\}$, and 
$$\rho_{\wal}(\bsk):=\prod_{i=1}^t\rho_{\wal}(k_i)$$
with 
$$\rho_{\wal}(k)=\left\{\begin{array}{ll}
1&\mbox{ if } k=0,\\
\frac{1}{p^{g+1}\sin \pi k_g/p}&\mbox{ if }p^g\leq k<p^{g+1},\,g\geq 0.
\end{array}\right.,$$
where $k_g$ is the $g$th digit of $k$ in the base $p$ expansion of $k$. 
\end{lemma}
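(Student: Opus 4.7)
The plan is to follow the standard ``Erdős–Turán–Koksma via Walsh functions'' argument adapted to the $p$-adic setting. The proof naturally splits into three steps: a discretization reduction absorbing the $1-(1-1/M)^t$ term, a finite Walsh series expansion of indicator functions of $p$-adic boxes, and an application of the triangle inequality using a sharp bound on the Walsh coefficients of half-open intervals.

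First, I would show that the supremum defining $D_N^*$ can be restricted to boxes $J'$ whose upper endpoints are multiples of $1/M$. Given an arbitrary test box $J=\prod_{i=1}^t[0,\alpha_i)$, replace each $\alpha_i$ by $\lceil\alpha_i M\rceil/M$ to obtain $J'$. Because every coordinate of every $\bsy_n$ lies in $\frac{1}{M}\ZZ$, no point can enter or leave the box, so $A(J,N)=A(J',N)$, while $\lambda_s(J')-\lambda_s(J)\le 1-(1-1/M)^t$. Splitting $|A(J,N)/N-\lambda_s(J)|$ through $\lambda_s(J')$ absorbs the $1-(1-1/M)^t$ term into the bound and reduces the analysis to boxes of the form $J'=\prod_{i=1}^t[0,a_i/M)$ with $0\le a_i\le M$.

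Second, I would expand the indicator of each one-dimensional factor in a (finite) Walsh series,
$$\chi_{[0,a_i/M)}(x_i)=\sum_{k_i=0}^{M-1}c_{k_i}(a_i)\,\wal_{k_i}(x_i),\qquad c_{k_i}(a_i)=\int_0^{a_i/M}\overline{\wal_{k_i}(x)}\,dx,$$
noting that $c_0(a_i)=a_i/M=\lambda_1([0,a_i/M))$. Taking the tensor product yields
$$\chi_{J'}(\bsx)=\sum_{\bsk\in\Delta_m}\Bigl(\prod_{i=1}^t c_{k_i}(a_i)\Bigr)\wal_{\bsk}(\bsx),$$
whose $\bsk=\bszero$ term equals $\lambda_s(J')$. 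Averaging over the point set and separating the zero frequency produces
$$\frac{A(J',N)}{N}-\lambda_s(J')=\sum_{\bsk\in\Delta_m^*}\Bigl(\prod_{i=1}^t c_{k_i}(a_i)\Bigr)S_N(\wal_{\bsk}),$$
so the triangle inequality immediately gives a bound in terms of $|c_{k_i}(a_i)|$ and $|S_N(\wal_{\bsk})|$.

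The decisive input, and the step I expect to be the main technical obstacle, is the uniform coefficient bound $|c_k(a)|\le\rho_{\wal}(k)$ valid for every $0\le a\le M$ and every $k\ge 1$. To establish it, I would exploit that for $p^g\le k<p^{g+1}$ the function $\wal_k$ depends only on the first $g+1$ base-$p$ digits of its argument, so it is constant on each interval of length $p^{-(g+1)}$. Splitting the integral $\int_0^{a/M}\overline{\wal_k(x)}\,dx$ at the nearest multiple of $p^{-(g+1)}$ reduces the computation to a geometric sum in the roots of unity $\Phi_0(\cdot)^{k_g}$, plus a boundary piece of length $\le p^{-(g+1)}$; summing the geometric series and using $|1-\ee(k_g/p)|=2\sin(\pi k_g/p)$ yields the claimed bound $\rho_{\wal}(k)=1/(p^{g+1}\sin(\pi k_g/p))$. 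Combining this with the tensor-product structure gives $\prod_i|c_{k_i}(a_i)|\le\rho_{\wal}(\bsk)$; taking the supremum over all admissible $J'$ and adding the discretization error from the first step yields the inequality claimed in the lemma.
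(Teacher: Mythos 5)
The paper does not prove this lemma at all: it is imported verbatim from Hellekalek's paper (the citation \cite[Theorem~1]{hell}), so there is no internal proof to compare against. Your reconstruction is precisely the argument of that source: (i) discretization of the test boxes to corners in $\frac{1}{M}\ZZ^t$, which is exactly where the hypothesis $\bsy_n=\{\bsw_n/M\}$ enters and which produces the $1-(1-1/M)^t$ term; (ii) the \emph{exact} finite Walsh expansion of the discretized indicator, valid because $\wal_0,\ldots,\wal_{p^m-1}$ span the $p^m$-dimensional space of step functions constant on the intervals $[jp^{-m},(j+1)p^{-m})$, so no truncation error arises and only frequencies $\bsk\in\Delta_m$ occur; (iii) the coefficient bound $|c_k(a)|\le\rho_{\wal}(k)$. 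Steps (i) and (ii) are correct as you state them.

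The one place where your sketch, taken literally, falls short of the stated lemma is in step (iii): if you bound the geometric sum and the leftover boundary piece \emph{separately} by the triangle inequality, you obtain $|c_k(a)|\le\rho_{\wal}(k)\bigl(1+\sin(\pi k_g/p)\bigr)\le 2\rho_{\wal}(k)$, i.e.\ you lose a factor of up to $2$ against the constant claimed in the lemma (harmless for the paper's $O$-estimates, but not for the inequality as stated). The sharp bound requires absorbing the boundary piece into the geometric sum before estimating: with $\omega=e(k_g/p)$, $j$ complete subintervals of length $p^{-(g+1)}$ and fractional part $\theta\in[0,1)$, one has
$$c_k(a)=\zeta\,p^{-(g+1)}\left(\sum_{c=0}^{j-1}\omega^{-c}+\theta\omega^{-j}\right)=\zeta\,p^{-(g+1)}\,\frac{1-\omega^{-j}\bigl((1-\theta)+\theta\omega^{-1}\bigr)}{1-\omega^{-1}}$$
for some unimodular constant $\zeta$ (the value of $\overline{\wal_{k'}}$ on the relevant interval of length $p^{-g}$, where $k=k'+k_gp^g$). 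Since $(1-\theta)+\theta\omega^{-1}$ is a convex combination of two points of the closed unit disk, the numerator has modulus at most $2$, and $|1-\omega^{-1}|=2\sin(\pi k_g/p)$ gives $|c_k(a)|\le p^{-(g+1)}/\sin(\pi k_g/p)=\rho_{\wal}(k)$ exactly. With this refinement your proof is complete and coincides with the cited one.
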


\begin{lemma}[{\cite[Lemma 10.22]{DP10}}]\label{lem:3}
Let $t,\,m\in\NN$. For any prime number $p$, we have 
$$\sum_{\bsk\in \Delta_m}\rho_{\wal}(\bsk)=\left(1+m\frac{p^2-1}{3p}\right)^t.$$
\end{lemma}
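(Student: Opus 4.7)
The strategy is to exploit that $\rho_{\wal}$ is completely multiplicative across coordinates and that $\Delta_m=\{0,1,\ldots,p^m-1\}^t$ is a Cartesian product, in order to reduce the computation to a one-dimensional sum, which is then evaluated by splitting into digit shells and invoking a classical cosecant-squared identity.

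First I would factor
\[
\sum_{\bsk\in\Delta_m}\rho_{\wal}(\bsk)=\prod_{i=1}^t\(\sum_{k_i=0}^{p^m-1}\rho_{\wal}(k_i)\)=\(\sum_{k=0}^{p^m-1}\rho_{\wal}(k)\)^{t},
\]
so that it suffices to establish the one-dimensional identity $\sum_{k=0}^{p^m-1}\rho_{\wal}(k)=1+m(p^2-1)/(3p)$. Next I would partition $\{1,\ldots,p^m-1\}$ into the $m$ shells $S_g:=\{k\in\ZZ:p^g\leq k<p^{g+1}\}$ for $g=0,\ldots,m-1$. On $S_g$ the digit $k_g$ is the leading (hence nonzero) base-$p$ digit of $k$, while the lower digits $k_0,\ldots,k_{g-1}$ range freely over $\{0,1,\ldots,p-1\}$. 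Thus each value $a\in\{1,\ldots,p-1\}$ is attained by $k_g$ exactly $p^g$ times on $S_g$, and the prefactor $1/p^{g+1}$ is common to every $k\in S_g$. Collecting terms by leading digit,
\[
\sum_{k\in S_g}\rho_{\wal}(k)=\frac{p^g}{p^{g+1}}\sum_{a=1}^{p-1}\frac{1}{\sin^2(\pi a/p)}=\frac{1}{p}\sum_{a=1}^{p-1}\frac{1}{\sin^2(\pi a/p)},
\]
which is independent of $g$. (I interpret the exponent on the sine in the definition of $\rho_{\wal}$ as $2$, following the convention of \cite{DP10}; otherwise the lemma fails already for $p=3,\,m=1$.)

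The last ingredient is the classical trigonometric identity
\[
\sum_{a=1}^{N-1}\frac{1}{\sin^2(\pi a/N)}=\frac{N^2-1}{3}\qquad(N\geq 2),
\]
applied with $N=p$; this yields $\sum_{k\in S_g}\rho_{\wal}(k)=(p^2-1)/(3p)$ for every $g$. Summing over $g=0,\ldots,m-1$ and adding the contribution $\rho_{\wal}(0)=1$ delivers the one-dimensional identity, and raising to the $t$-th power finishes the lemma.

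The cosecant-squared identity is the only genuinely analytic step, and is the one place where care is required. It can be proved, for example, by grouping the Mittag--Leffler expansion $\pi^2/\sin^2(\pi z)=\sum_{n\in\ZZ}(z-n)^{-2}$ into residue classes modulo $N$, rewriting the right side as $\sum_{a=0}^{N-1}\pi^2/(N^2\sin^2(\pi(z-a/N)))$, and comparing Laurent expansions at $z=0$; alternatively by logarithmically differentiating $\prod_{a=1}^{N-1}(1-\zeta^a x)=1+x+\cdots+x^{N-1}$ twice at $x=1$, where $\zeta=\exp(2\pi\ii/N)$. Everything else in the proof is elementary bookkeeping enabled by the product structure of $\Delta_m$ and the fact that the weight of a $k\in S_g$ depends only on $g$ and on its leading digit.
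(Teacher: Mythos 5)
Your proof is correct, and there is no methodological divergence to report: the paper itself contains no proof of Lemma~\ref{lem:3} — it is imported verbatim from \cite[Lemma~10.22]{DP10} — and your argument is exactly the standard one behind that citation, namely factoring the sum over the $t$ coordinates via $\rho_{\wal}(\bsk)=\prod_{i=1}^t\rho_{\wal}(k_i)$ and the product structure of $\Delta_m$, splitting $\{1,\ldots,p^m-1\}$ into the shells $p^g\le k<p^{g+1}$ on which the weight depends only on $g$ and the leading digit (each leading digit value occurring exactly $p^g$ times), and invoking $\sum_{a=1}^{p-1}1/\sin^2(\pi a/p)=(p^2-1)/3$.

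Your parenthetical caveat about the exponent is justified and is in fact the one substantive point worth recording. As printed in Lemma~\ref{lem:2}, $\rho_{\wal}(k)=1/\bigl(p^{g+1}\sin(\pi k_g/p)\bigr)$ carries the sine to the \emph{first} power, and with that weight the asserted equality is false for every $p\ge 3$: for $p=3$, $m=t=1$ the left-hand side is $1+4/(3\sqrt{3})\approx 1.77$ against $1+8/9\approx 1.89$, the two conventions agreeing only at $p=2$ because $\sin(\pi/2)=1$. The stated equality therefore forces the exponent $2$, confirming your reading; the missing square is a typo in the paper's transcription of the weight, not a gap in your proof. Moreover, since $1/\sin x\le 1/\sin^2 x$ on $(0,\pi)$, the discrepancy bound of Lemma~\ref{lem:2} remains valid after enlarging the weights to the squared-sine convention, so the paper's joint use of Lemmas~\ref{lem:2} and \ref{lem:3} is consistent under your interpretation. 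Finally, both of your proposed derivations of the cosecant-squared identity (grouping the Mittag--Leffler expansion of $\pi^2/\sin^2(\pi z)$ into residue classes, or twice logarithmically differentiating $\prod_{a=1}^{p-1}(1-\zeta^a x)=1+x+\cdots+x^{p-1}$ at $x=1$) are sound.
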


For the statement of the next auxiliary result we define the following magnitudes:

$$G_{p,m}=\{f(X)\in\FF_p[X]:\deg(f(X))<m\}\quad\mbox{and}\quad G^*_{p,m}=G_{p,m}\setminus\{0\}.$$
Furthermore for the rational function $p(X)/q(X)$ in $\FF_p(X)\setminus\{0\}$ we define the degree evaluation $\nu$ by $$\nu(p(X)/q(X)):=\deg(p(X))-\deg(q(X))$$
and we set $\nu(0)=-\infty$. 


\begin{lemma} \label{lem:4}
Let $p(X)$ be a monic irreducible polynomial in $\FF_p(X)$ with degree $m$. Let $u\in\NN_0$ such that $u\leq m$. Then 
$$\#\{a(X)\in G_{p,m}^*:\nu(a(X)/p(X))<-u\}\leq p^{m-u}-1. $$
\end{lemma}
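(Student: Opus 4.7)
The plan is to unwind the definition of $\nu$ and then simply count polynomials of bounded degree. Concretely, for any $a(X)\in G^*_{p,m}$, the polynomial $a(X)$ is nonzero with $\deg(a(X))<m$, and so by the definition of $\nu$ on the quotient $a(X)/p(X)\in\FF_p(X)\setminus\{0\}$, we have $\nu(a(X)/p(X))=\deg(a(X))-\deg(p(X))=\deg(a(X))-m$.

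Next I translate the defining inequality of the set: $\nu(a(X)/p(X))<-u$ becomes $\deg(a(X))-m<-u$, i.e., $\deg(a(X))<m-u$, equivalently $\deg(a(X))\leq m-u-1$. The set to be counted is therefore
$$\{a(X)\in\FF_p[X]\setminus\{0\}:\deg(a(X))\leq m-u-1\},$$
and since a polynomial of degree at most $m-u-1$ over $\FF_p$ is determined by its $m-u$ coefficients in $\FF_p$, there are exactly $p^{m-u}$ such polynomials including the zero polynomial, giving $p^{m-u}-1$ nonzero ones. This yields the asserted bound (in fact with equality when $u\leq m$).

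There is essentially no obstacle here: one only needs to observe that the irreducibility of $p(X)$ and the degree bound $\deg(a(X))<m$ automatically force $a(X)/p(X)$ to already be in reduced form, so the value of $\nu$ can be read off directly from $\deg(a(X))-m$ without having to worry about cancellation. The hypothesis $u\leq m$ ensures $p^{m-u}\geq 1$ so that the right-hand side is nonnegative, and the rest is just counting.
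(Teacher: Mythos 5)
Your proposal is correct and follows essentially the same one-line argument as the paper: unwind $\nu(a(X)/p(X))=\deg(a(X))-m$, so the condition is $\deg(a(X))<m-u$, and count the nonzero polynomials of that degree. (Your aside about reduced form is harmless but unnecessary, since $\nu$ as defined is just the degree difference of numerator and denominator and is well defined for any representation of the rational function.)
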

\begin{proof}
The restriction $\nu(a(X)/p(X))<-u$ means $\deg(a(X))< \deg(p(X))-u=m-u$ and the result follows. 
\end{proof}


With a number $k\in\{0,1,\ldots,p^m-1\}$ we associate the polynomial $k(X)=\sum_{i=0}^{m-1}{k_i}X^i$ where the coefficients are determined by the base $p$ representation of $k=\sum_{i=0}^{m-1}k_ip^i$. 
For a tuple $\bsk\in \Delta_m$ we associate a polynomial with each component and write $\bsk(X)$ for the $t$-tuple of polynomials.

\begin{lemma}[{\cite[Lemma~1]{hofer}}]\label{lem:6}
Let $e\in\NN_0$, $B(X),R(X)\in\FF_p[X]$ with $\deg(R(X))<\deg(B(X))=e$, and let $B(X)$ be monic. Furthermore, let $u\in\NN$ and $K\in\NN_0$. Let $n=Kp^{u+e}, Kp^{u+e}+1,\ldots,(K+1)p^{u+e}-1$. We regard all associated polynomials $n(X)$ that satisfy $n(X)\equiv R(X)\pmod{B(X)}$. Then they are of the form 
$$n(X)=k(X)B(X)+R(X)$$
with $k(X)$ out of the set 
$$k(X)=r(X)+X^uC(X)$$
with a fixed $C(X)\in\FF_p[X]$ and $r(X)$ ranges over all polynomials of degree $<u$. \end{lemma}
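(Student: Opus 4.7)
The plan is to parametrize the integers $n\in[Kp^{u+e},(K+1)p^{u+e})$, translate the congruence on the associated polynomial $n(X)$ into a condition on a smaller polynomial via polynomial division by $B(X)$, and then read off the resulting form of the quotient $k(X)=(n(X)-R(X))/B(X)$.

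First I would write each such $n$ uniquely as $n=Kp^{u+e}+j$ with $0\le j<p^{u+e}$. Because $j$ has no base-$p$ digits at positions $\ge u+e$, no carries occur when forming the base-$p$ expansion of $n$, so the associated polynomial splits cleanly as $n(X)=K(X)X^{u+e}+j(X)$, with $j(X)$ ranging freely over $G_{p,u+e}$ as $j$ ranges over $\{0,\ldots,p^{u+e}-1\}$. This splitting is the only place where the range condition on $n$ is used, and it is the main technical point of the argument.

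Next I would divide $X^{u+e}$ by the monic polynomial $B(X)$, writing $X^{u+e}=Q(X)B(X)+S(X)$ with $\deg(S)<e$ and $Q(X)$ monic of degree exactly $u$. Substituting gives $n(X)\equiv K(X)S(X)+j(X)\pmod{B(X)}$, so the condition $n(X)\equiv R(X)\pmod{B(X)}$ becomes $j(X)\equiv T(X)\pmod{B(X)}$, where $T(X)\in G_{p,e}$ is the remainder of $R(X)-K(X)S(X)$ modulo $B(X)$, and is therefore fixed. Writing $R(X)-K(X)S(X)=A(X)B(X)+T(X)$, the polynomials $j(X)$ of degree $<u+e$ satisfying this congruence are precisely $j(X)=r(X)B(X)+T(X)$ as $r(X)$ ranges over $G_{p,u}$ (there is exactly one free coefficient choice for each position $e,e+1,\ldots,u+e-1$). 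This matches the count $p^{u+e}/p^e=p^u$.

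Finally I would compute $k(X)=(n(X)-R(X))/B(X)$ by combining the pieces above. One obtains $k(X)B(X)=K(X)B(X)Q(X)+r(X)B(X)-A(X)B(X)$, hence $k(X)=K(X)Q(X)-A(X)+r(X)$. The polynomial $K(X)Q(X)-A(X)$ is independent of $n$ and of $r$, so decomposing it as $\alpha(X)+X^uC(X)$ with $\deg(\alpha)<u$ and $C(X)$ fixed, I would conclude $k(X)=(r(X)+\alpha(X))+X^uC(X)$. Since $r\mapsto r+\alpha$ is a bijection of $G_{p,u}$, the set of $k(X)$'s arising is exactly $\{r'(X)+X^uC(X):\deg(r')<u\}$, as claimed. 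Everything after the initial carry-free decomposition is routine Euclidean division in $\FF_p[X]$.
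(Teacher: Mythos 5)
Your proof is correct: the carry-free splitting $n(X)=K(X)X^{u+e}+j(X)$ for $n=Kp^{u+e}+j$, followed by Euclidean division of $X^{u+e}$ and of $R(X)-K(X)S(X)$ by the monic $B(X)$, gives $k(X)=K(X)Q(X)-A(X)+r(X)$ with $r(X)$ free of degree $<u$, and absorbing the low-degree part $\alpha(X)$ of the fixed polynomial $K(X)Q(X)-A(X)$ into $r(X)$ via the bijection $r\mapsto r+\alpha$ yields exactly the claimed set $\{r'(X)+X^{u}C(X):\deg(r')<u\}$. The paper itself does not prove this lemma but quotes it from \cite{hofer}, and your argument is essentially the standard one: the only genuinely nontrivial observation is that the range condition on $n$ enters solely through the carry-free digit decomposition, after which everything is routine division in $\FF_p[X]$.
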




\begin{lemma}[{\cite[Theorem~2.6]{KuiNie}}]\label{lem:9}
For $1\leq i \leq k$ let $w_i$ be a point set of $N_i$ elements in $[0,1]^s$. Let $w$ be the superposition of $w_1,\ldots,w_k$, that is a point set of $N=N_1+\cdots+N_k$ points. Then 
$$ND_N^*(w)\leq \sum_{i=1}^kN_iD^*_{N_i}(w_i).$$
\end{lemma}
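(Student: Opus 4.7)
The plan is to reduce the star discrepancy of the superposition $w$ to those of its constituents by exploiting additivity of the counting function together with the triangle inequality. The whole argument is essentially one line once the correct decomposition is written down.

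First I would fix an arbitrary half-open box $J\subseteq[0,1)^s$ with lower-left corner at the origin and denote by $A_i(J,N_i)$ the counting function of $w_i$ on $J$. Because the superposition $w$ is, by definition, the disjoint union of the $w_i$ (counted with multiplicity), its counting function on $J$ satisfies $A(J,N)=\sum_{i=1}^{k}A_i(J,N_i)$. Combining this with the obvious identity $N\lambda_s(J)=\sum_{i=1}^{k}N_i\lambda_s(J)$ gives
$$A(J,N)-N\lambda_s(J)\;=\;\sum_{i=1}^{k}\bigl(A_i(J,N_i)-N_i\lambda_s(J)\bigr).$$

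Next, applying the triangle inequality to the right-hand side and then the definition of $D^*_{N_i}$ to each summand yields
$$|A(J,N)-N\lambda_s(J)|\;\leq\;\sum_{i=1}^{k}|A_i(J,N_i)-N_i\lambda_s(J)|\;\leq\;\sum_{i=1}^{k}N_i D^*_{N_i}(w_i).$$
Since the right-hand side is independent of $J$, taking the supremum over all admissible $J$ on the left-hand side produces exactly the claimed bound $ND^*_N(w)\leq\sum_{i=1}^{k}N_i D^*_{N_i}(w_i)$.

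There is no substantive obstacle here: the proof requires nothing beyond the definitions of superposition and star discrepancy plus one application of the triangle inequality. The only subtlety worth noting is that the identity $A(J,N)=\sum_{i}A_i(J,N_i)$ holds regardless of whether the constituent point sets share common points, because multiplicities are counted on both sides; hence no disjointness assumption on the $w_i$ is needed.
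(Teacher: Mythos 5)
Your proof is correct and is the standard argument: the paper itself gives no proof of this lemma, citing it directly from Kuipers and Niederreiter, and your decomposition of the counting function $A(J,N)=\sum_{i=1}^k A_i(J,N_i)$ followed by the triangle inequality and a supremum over boxes $J$ is exactly the proof given in that reference. Your closing remark that no disjointness of the $w_i$ is needed (multiplicities being counted on both sides) is also accurate and worth keeping in mind, since the superpositions used later in the paper happen to be disjoint but the lemma does not require it.
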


\section{Proof of Theorem~\ref{thm:1}}\label{sec:3}

In this section we investigate the distribution of the point set $(\bsz_n)_{0\leq n<p^m}\in[0,1)^{1+s+t}$ with $m,\,s,\,t\in\NN$ and
$$\bsz_n:=(n/p^m,\bsx_n,\bsy_n)$$
where $(\bsy_n)_{0\leq n<p^m}$ is a polynomial lattice point set $\mathcal{P}(\bsq(X),p(X))$ in $[0,1)^t$ with $p(X)$ monic, irreducible, and with degree $m$ and where $(\bsx_n)_{n\geq 0}$ is a Halton-type sequence in bases $(b_1(X),\ldots,b_s(X))$, all monic, pairwise coprime, coprime with $p(X)$, and with degrees $e_1,\ldots,e_s$.

We set $N=p^m$. Using a well-known result in discrepancy theory (see, e.g. \cite[Lemma~3.7]{niesiam}), we have 
$$ND^*_N(\bsz_n)\leq \max_{1\leq \tilde{N}\leq N}\tilde{N}D^*_{\tilde{N}}((\bsx_n,\bsy_n))+1.$$

Let $\tilde{N}\in\{1,\ldots,N\}$ be fixed. 

We expand $\tilde{N}$ in base $p$, $\tilde{N}=N_0+N_1p+\cdots+N_rp^r$ with $N_i\in\{0,1,\ldots,p-1\}$ and $r\leq m$. For $u=0,\ldots,r$ and $v=1,\ldots,N_u$ we define the point set 
$$w_{u,v}:=\{(\bsx_n,\bsy_n):n\in\NN_0,\,\,(v-1)p^u+\cdots+N_rp^r\leq n< {v}p^u+\cdots+N_rp^r\}.$$
Then $|w_{u,v}|=p^u$ and  $$\{0,1,\ldots,\tilde{N}-1\}$$ is obtained by the disjoint union $$\bigcup_{u=0}^r\bigcup_{v=1}^{N_u}\{n\in\NN_0:(v-1)p^u+\cdots+N_rp^r\leq n< {v}p^u+\cdots+N_rp^r\}$$ 
of at most $pm=p\log_p N$ sets. 

We apply Lemma~\ref{lem:9}, which results in one $\log \,N$ factor in Theorem~\ref{thm:1} with a  constant depending on $p$. Then we have have to estimate 
$$p^uD_{p^u}^*(w_{u,v}).$$

We define 
$$f_i:=\left\lceil \frac{u}{e_i}\right\rceil$$  
for $1\leq i\leq s$. 
  
  The first aim in the proof is to compute or
  estimate the counting function $A(J,p^u)$ relative to the pointset $w_{u,v}$,
where $J\subseteq [0,1)^{s+t}$ is an interval of the form
\begin{equation} \label{Equ:formJ}
  J=\prod_{i=1}^s[0,v_ip^{-e_if_i})\times
  \prod_{j=1}^t[0,\beta_j)
\end{equation}
 with $v_1,\ldots,v_s \in\ZZ,\,
  1\leq v_i\leq p^{e_if_i}$ for $1\leq i\leq s$, and $0
  < \beta_j \leq 1$ for $1\leq j\leq t$.

  The crucial step is to exploit special properties of the Halton-type sequence. By Lemma~\ref{lem:1},
 for any integer $n\ge 0$ we have
  \begin{equation*} (\varphi_{b_1(X)}(n(X)),\ldots,\varphi_{b_s(X)}(n(X)))\in\prod_{i=1}^{s}[0, v_i p^{-e_if_i})
    \text{ if and only if } n(X)\in \bigcup_{k=1}^{M}\mathcal{R}_k,
  \end{equation*}
  where $$1\le M\le p^{e_1} \cdots p^{e_s} f_1 \cdots f_s=O_{p,b_1(X),\ldots,b_s(X)}(\log^sN).$$ Each $\mathcal{R}_k$ is a residue
  class in $\FF_p[X]$, and $\mathcal{R}_1,\ldots, \mathcal{R}_M$ are (pairwise) disjoint.  The moduli $B_k(X)$ of
  the residue classes $\mathcal{R}_k$ are of the form $b_1(X)^{j_1} \cdots b_s(X)^{j_s}$
  with integers $1\le j_i\le f_i$ for $1\le i\le s$ and the residues $R_k(X)$ satisfy $\deg(R_k(X)) <\deg(B_k(X))$ for $1 \le k \le M$.  The sets $\mathcal{R}_1,\ldots,
  \mathcal{R}_M$ depend only on $b_1(X),\ldots, b_s(X),v_1,\ldots, v_s,f_1,\ldots,
  f_s$ and are thus independent of $n$. Furthermore, one can easily prove for the Lebesgue measure of
  $\prod_{i=1}^s[0,v_ip^{-e_if_i})$ that
  \begin{eqnarray*}
  \lefteqn{\lambda_s\(\prod_{i=1}^s[0,v_ip^{-e_if_i})\)}\\
  &=&\prod_{i=1}^{s}v_ip^{-e_if_i} \\
    &=&\lim_{N\to\infty}\#\{0\leq n<N:(\varphi_{b_1(X)}(n(X)),\ldots,\varphi_{b_s(X)}(n(X)))\in \prod_{i=1}^s[0,v_ip^{-e_if_i}) \}\\
    &=&\lim_{N\to\infty}\#\{0\leq n<N: n(X)\in \bigcup_{k=1}^{M}\mathcal{R}_k\} \\
    &=&\sum_{k=1}^M\lim_{N\to\infty}\#\{0\leq n<N:n(X)\equiv R_k(X) \pmod{B_k(X)}\}\\
    &=&    \sum_{k=1}^{M}\frac{1}{p^{\deg(B_k(X))}},
  \end{eqnarray*}
  by applying the uniform distribution of the Halton type sequence and the disjointness of $\mathcal{R}_1,\ldots,\mathcal{R}_M$.

  Now we split up the counting function $A(J,p^u)$ into $M$ parts as follows:
  $A(J,p^u)=\sum_{k=1}^{M} S_k$, where
  \begin{eqnarray*}
    S_k&=&\#\{(v-1)p^u+\cdots+N_rp^r\leq n< {v}p^u+\cdots+N_rp^r:n(X)\equiv R_k(X) \pmod{B_k(X)} \\
   & &\quad \quad \text{ and }
      \bsy_n\in \prod_{j=1}^t[0,\beta_j)\}
  \end{eqnarray*}
 for $1\leq k\leq M$. 
Then $$|A(J,p^u)-p^u\lambda_{s+t}(J)|\leq \sum_{k=1}^{M}\underbrace{\left|S_k-p^u\frac{1}{p^{\deg(B_k(X))}}\prod_{j=1}^t \beta_j \right|}_{=:\delta_k}.$$ 
This summation over $k$ then results in $s$ $\log\, N$ factors in Theorem~\ref{thm:1} with a constant depending in $p,b_1(X),\ldots,b_s(X)$. 
  
  We fix $k$ with $1 \le k \le M$ for the moment.

Note that if $p^u < p^{\deg(B_k(X))}$, then $S_k=0$ or $1$, and so in this case
$\delta_k\leq 1$.

Assume now that $p^u \ge p^{\deg(B_k(X))}$. 
We define the set 
$$\mathcal{L}_k:=\left\{(v-1)p^u+\cdots+N_rp^r\leq n< {v}p^u+\cdots+N_rp^r:n(X)\equiv R_k(X) \pmod{B_k(X)}\right\}.$$
Then by Lemma~\ref{lem:6}, we know $|\mathcal{L}_k|=p^{u-\deg(B_k(X))}=:L_k$. 
We define the point set 
$$\mathcal{P}_k=\{\bsy_n:n\in\mathcal{L}_k\}.$$
Then 
  \begin{align*}
    \delta_k\leq L_kD^*_{L_k}(\mathcal{P}_k).
  \end{align*}
  
We summarize 
  \begin{eqnarray*}
   | A(J,p^u)-p^u\lambda_{s+t}(J)| \leq O(M)+\sum_{k=1\atop \deg(B_k(X))\leq u}^M   L_kD^*_{L_k}(\mathcal{P}_k)
  \end{eqnarray*}
  An arbitrary interval $I\subseteq [0,1)^{s+t}$ of the form
\begin{equation} \label{Equ:formI}
I= \prod_{i=1}^s[0,\alpha_i)\times \prod_{j=1}^t[0,\beta_j)
\end{equation}
 with
  $0<\alpha_i\leq 1$ for $1\leq i\leq s$ and $0 <
  \beta_j \leq 1$ for $1\leq j\leq t$ can be approximated from below
  and above by an interval $J$ of the form~\eqref{Equ:formJ},
  by taking the nearest fraction to the left and to the right, respectively, of $\alpha_i$ of the form
$v_iq^{-e_if_i}$ with $v_i\in\ZZ$. We easily get
  \begin{equation*}
    \left|A(I,p^u) -p^u\lambda_{s+t}(I)\right|\leq
    \underbrace{p^u\sum_{i=1}^s p^{-e_if_i}}_{\leq s}+\left|A(J,N) -N\lambda_{s+t}(J)\right|.
  \end{equation*}

The core of the proof is the study of the average
\begin{equation}\label{equ:central}
\frac{1}{|(G^*_{p,m})|^t}\sum_{\bsq(X)\in (G^*_{p,m})^t } L_kD^*_{L_k}(\mathcal{P}_k),\end{equation}
after exchanging the order of summation.

Note that $B_k(X)$ is monic and coprime with $p(X)$, and $\deg(B_k(X))\leq u$. In the following we set $d:=u-\deg(B_k(X))$ and we will omit the index $k$.

First we compute the subset $\mathcal{P}$ of the polynomial lattice point set $\mathcal{P}(\bsq(X),p(X))$ using the corresponding $n(X)\equiv R(X)\pmod{B(X)}$ and bearing in mind Lemma~\ref{lem:6}.

 Choose $l\in\{0,1,\ldots,p^{d}-1\}$, regard
\begin{eqnarray*}
\lefteqn{\left\{\frac{\Big(\big(l(X)+X^dC(X)\big)B(X)+R(X)\Big)q_i(X)}{p(X)}\right\}}\\
&=&\left\{\frac{l(X)B(X)q_i(X)}{p(X)}\right\}+\left\{\frac{(X^dC(X)B(X)+R(X))q_i(X)}{p(X)}\right\}.
\end{eqnarray*}
Let $\sum_{j=1}^\infty r^{(i)}_jX^{-j}$ be the formal Laurent series of $\left\{\frac{(X^dC(X)B(X)+R(X))q_i(X)}{p(X)}\right\}$ and $\sum_{j=1}^\infty a^{(i)}_jX^{-j}$ be the Laurent series of $\left\{\frac{B(X)q_i(X)}{p(X)}\right\}$
then compute 
$$\begin{pmatrix}
a^{(i)}_1&\cdots&a^{(i)}_d\\
\vdots&\ddots&\vdots\\
a^{(i)}_d&\cdots &a^{(i)}_{2d-1}\\
\vdots&\ddots&\vdots\\
a^{(i)}_m&\cdots&a^{(i)}_{m+d-1}
\end{pmatrix} 
\underbrace{\begin{pmatrix}l_0\\l_1\\\vdots\\l_{d-1}
\end{pmatrix}}_{=:\bsl}+
\begin{pmatrix}r^{(i)}_1\\\vdots\\r^{(i)}_d\\\vdots\\r^{(i)}_{m}
\end{pmatrix}=\begin{pmatrix}y^{(i)}_{l,1}\\\vdots\\y^{(i)}_{l,d}\\\vdots\\y^{(i)}_{l,m}
\end{pmatrix}\in\FF_p^m.
$$
Set
$$y^{(i)}_{n_l}=\sum_{j=1}^my_{l,j}^{(i)}p^{-j}.$$
Finally, letting $l$ range between $0$ and $p^d-1$.

We define 
$$C_{i,d}=\begin{pmatrix}
a^{(i)}_1&\cdots&a^{(i)}_d\\
\vdots&\ddots&\vdots\\
a^{(i)}_d&\cdots &a^{(i)}_{2d-1}\\
\vdots&\ddots&\vdots\\
a^{(i)}_m&\cdots &a^{(i)}_{m+d-1}
\end{pmatrix}. $$

We apply Lemma~\ref{lem:2} to $LD^*_L(\mathcal{P})$ and obtain 
\begin{align*}
\frac{1}{|(G^*_{p,m})|^t}\sum_{\bsq(X)\in (G^*_{p,m})^t } LD^*_{L}(\mathcal{P})&\leq \frac{t p^d}{p^m}+\frac{1}{|(G^*_{p,m})|^t}\sum_{\bsq(X)\in (G^*_{p,m})^t }\sum_{\bsk \in\Delta_m^*}\rho_{\wal}(\bsk)\left|\sum_{l=0}^{p^d-1}\wal_{\bsk}(\bsy_{n_l})\right|.
\end{align*}
We concentrate on 
$$\left|\sum_{l=0}^{p^d-1}\wal_{\bsk}(\bsy_{n_l})\right|=\left|\sum_{l=0}^{p^d-1}e\left( \sum_{i=1}^t\sum_{j=1}^my_{l,j}^{(i)}k_{j-1}^{(i)}\right)\right|.$$
where we expanded $k_i=\sum_{j=0}^{m-1}k^{(i)}_jp^j$ in base $p$. We abbreviate the $j$th row of $C_{i,d}$ to $\bsc_j^{(i)}$ and remember that 
$$y_{l,j}^{(i)}=r_j^{(i)}+\bsc_j^{(i)}\cdot \bsl \pmod{p}.$$
Hence 
$$\left|\sum_{l=0}^{p^d-1}\wal_{\bsk}(\bsy_{n_l})\right|=\left|\sum_{l=0}^{p^d-1}e\left(\left(\sum_{i=1}^t\sum_{j=1}^m k_{j-1}^{(i)}\bsc_j^{(i)}\right)\cdot \bsl\right)\right|=\left\{ 
\begin{array}{ll}
p^d & \mbox{ if }C_{1,d}^{T}\rmk_1+\cdots + C_{t,d}^{T}\rmk_t=0\in\FF_p^d\\
\bszero&\mbox{ else.}
\end{array}
\right.$$
Here $\rmk_i$ denotes the $m$-dimensional column vector $(k_0^{(i)},\ldots,k_{m-1}^{(i)})^T$ built up by the base $p$ digits of the $i$th component of $\bsk$. 

Now a crucial point is that 
\begin{equation}\label{equ:equivalence1}C_{1,d}^{T}\rmk_1+\cdots + C_{t,d}^{T}\rmk_t=\bszero\in\FF_p^d\end{equation}
 is equivalent to \begin{equation}\label{equ:equivalence2}\nu\left(\left\{\frac{\bsk(X)B(X)\bsq(X)}{p(X)}\right\}\right)<-d.\end{equation}
 
Note that \eqref{equ:equivalence1} denotes 
 
 $$\sum_{i=1}^t \begin{pmatrix}
a^{(i)}_1&\cdots&a^{(i)}_m\\
\vdots&\ddots&\vdots\\
a^{(i)}_d&\cdots &a^{(i)}_{m+d-1}
\end{pmatrix} 
\cdot\begin{pmatrix}k_0^{(i)}\\ \vdots\\ k_{m-1}^{(i)} \end{pmatrix} =\bszero \in \FF_p^d.
$$
Following the argumentations of \cite[Proof of Lemma~10.6]{DP10}
we end up with 
$$\frac{\bsk(X)B(X)\bsq(X)}{p(X)}=g+H$$
with $g\in\FF_p[X]$ and $H\in\FF_p((X^{-1}))$ of the form $\sum_{j=d+1}^\infty h_jX^{-j}$ which is equivalent to \eqref{equ:equivalence2}.

We define 
$$\mathcal{D}'_{\bsq,p,B}=\{\bsk\in\{0,1,\ldots,p^{m}-1\}^t:\nu\left(\left\{\frac{\bsk(X)\cdot B(X)\cdot \bsq(X)}{p(X)}\right\}\right)< -d\}\setminus\{\bszero\}.$$
and its subset
$$\mathcal{D}'_{\bsq,p}=\{\bsk\in\{0,1,\ldots,p^{m}-1\}^t:\bsk(X)\cdot \bsq(X)\equiv 0 \pmod{p(X)}\}\setminus\{\bszero\}.$$
Using the above considerations we obtain for 
\begin{align*}
\frac{1}{|(G^*_{p,m})|^t}\sum_{\bsq(X)\in (G^*_{p,m})^t }\sum_{\bsk \in\Delta_m^*}\rho_{\wal}(\bsk)\left|\sum_{l=0}^{p^d-1}\wal_{\bsk}(\bsy_{n_l})\right|=\sum_{\bsk \in\Delta_m^*}\rho_{\wal}(\bsk)\frac{1}{|(G^*_{p,m})|^t}\sum_{\bsq(X)\in (G^*_{p,m})^t \atop \bsk\in\mathcal{D}'_{\bsq,p,B}}p^d. 
\end{align*}
Altogether we have to compute for $\bsk\in\Delta_m^{*}$ the number
\begin{align*}
\#\{\bsq(X)\in (G^*_{p,m})^t:\bsk\in \mathcal{D}'_{\bsq,p,B}\}&=\underbrace{\#\{\bsq(X)\in (G^*_{p,m})^t:\bsk\in \mathcal{D}'_{\bsq,p}\}}_{=:K_1}\\
&+\underbrace{\#\{\bsq(X)\in (G^*_{p,m})^t:\bsk\in \mathcal{D}'_{\bsq,p,B}\setminus \mathcal{D}'_{\bsq,p}\}}_{=:K_2}.
\end{align*}

The easy part is to compute $K_1$ which equals $(p^m-1)^{t-1}$ (confer, e.g., \cite[Proof of Theorem~10.21]{DP10}). 

We now concentrate on $K_2$. 

Let $t_0$ be maximal such that $k_{t_0}\neq 0$. We denote by $\bsb^{(i)}$ the projection of $\bsb$ onto the first $i$ components of $\bsb$. Then
\begin{eqnarray*}
\lefteqn{\frac{K_2}{(p^m-1)^{t-t_0}}}\\
&=&\#\{\bsq^{(t_0)}(X)\in(G_{p,m}^*)^{t_0}:\nu\left(\left\{\frac{\bsk^{(t_0)}(X) B(X)\bsq^{(t_0)}(X)}{p(X)}\right\}\right)<-d\\
&& \mbox{ and }\bsk^{(t_0)}(X)\cdot\bsq^{(t_0)}(X)\not\equiv 0\pmod{p(X)}\}\\
&\leq &\sum_{\bsq^{(t_0-1)}(X)\in(G_{p,m}^*)^{t_0-1}}\#\{q_{t_0}(X)\in G_{p,m}:\\
&&\nu(\{\big(\bsk^{(t_0-1)}(X) B(X)\bsq^{(t_0-1)}(X)+k_{t_0}(X)B(X)q_{t_0}(X)\big)/p(X)\})<-d\\
&& \quad \mbox{ and }\bsk^{(t_0-1)}(X)\cdot\bsq^{(t_0-1)}(X)\not\equiv -k_{t_0}(X)q_{t_0}(X)\pmod{p(X)}\}.
\end{eqnarray*}
Since $p(X)$ is irreducible there is exactly one $a(X)\in G_{p,m}$ such that $$\bsk^{(t_0-1)}(X)\cdot\bsq^{(t_0-1)}(X)\equiv -k_{t_0}(X)a(X)\pmod{p(X)}.$$
Thus 
\begin{eqnarray*}
\lefteqn{\frac{K_2}{(p^m-1)^{t-t_0}}}\\
&\leq & \sum_{\bsq^{(t_0-1)}(X)\in(G_{p,m}^*)^{t_0-1}}\#\{q_{t_0}(X)\in G_{p,m}\setminus\{a(X)\}:\\
&&\nu(\{(\bsk^{(t_0-1)}(X) B(X)\bsq^{(t_0-1)}(X)+k_{t_0}(X)B(X)q_{t_0}(X))/p(X)\})<-d\}.
\end{eqnarray*}
Now as $q_{t_0}(X)$ runs through $G_{p,m}\setminus\{a(X)\}$, $$\bsk^{(t_0-1)}(X)\cdot\bsq^{(t_0-1)}(X)+ k_{t_0}(X)q_{t_0}(X)\pmod{p(X)}$$
runs through all polynomials in $G_{p,m}^*$. 
As $B(X)$ and $p(X)$ were assumed coprime we have that 
$$B(X)\bsk^{(t_0-1)}(X)\cdot\bsq^{(t_0-1)}(X)+ B(X)k_{t_0}(X)q_{t_0}(X)\pmod{p(X)}$$
runs through all polynomials in $G_{p,m}^*$. 

Hence 
\begin{eqnarray*}
\frac{K_2}{(p^m-1)^{t-t_0}}
&\leq & \sum_{\bsq^{(t_0-1)}\in(G_{p,m}^*)^{t_0-1}}\#\{b(X)\in G_{p,m}^*:\nu(b(X)/p(X))<-d\}.
\end{eqnarray*}
Altogether the core estimate provides Lemma~\ref{lem:4} which states
$$\#\{b(X)\in G_{p,m}^*:\nu(b(X)/p(X))<-u\}\leq p^{m-d}-1. $$

Thus
$$K_2\leq  (p^m-1)^{t-1}(p^{m-d}-1). $$
So we can summarize 
$$K_1+K_2\leq (p^m-1)^{t-1}p^{m-d}.$$
Finally, application of Lemma~\ref{lem:3} yields
$$\frac{1}{|G_{p,m}^*|^t}\sum_{\bsq\in(G_{p,m}^*)^t}LD_L^*(\mathcal{P})\leq t+\frac{p^m}{p^m-1}\left(1+m\frac{p^2-1}{3p}\right)^t=O_{p,t}(\log^t N).$$

\section{Proof of Theorem~\ref{thm:2}}\label{sec:4}

The proof follows the same steps as the proof of Theorem~\ref{thm:1}, until we have to compute the average
$$\frac{1}{|(G^*_{p,m})|}\sum_{g(X)\in (G^*_{p,m})} L_kD^*_{L_k}(\mathcal{P}_k).$$
We show again that it is of the form $O_{p,t}(\log^{t} N)$. 

Using the same argumentation as in the proof of Theorem~\ref{thm:1} we end up with treating 

\begin{align*}
\#\{g(X)\in G^*_{p,m}:\bsk\in \mathcal{D}'_{g,p,B,t}\}&=\underbrace{\#\{g(X)\in G^*_{p,m}:\bsk\in \mathcal{D}'_{g,p,t}\}}_{=:K_1}\\
&+\underbrace{\#\{g(X)\in G^*_{p,m}:\bsk\in \mathcal{D}'_{g,p,B,t}\setminus \mathcal{D}'_{g,p,t}\}}_{=:K_2}
\end{align*}
where 

$$\mathcal{D}'_{g,p,B,t}=\{\bsk\in\{0,1,\ldots,p^{m}-1\}^t:\nu\left(\left\{\frac{\bsk(X)\cdot B(X)\cdot (g(X),g^2(X)\ldots,g^{t}(X))}{p(X)}\right\}\right)< -d\}\setminus\{\bszero\}.$$
and 
$$\mathcal{D}'_{q,p,t}:=\{\bsk\in\{0,1,\ldots,p^{u}-1\}^t:\bsk(X)\cdot (g(X),g^2(X)\ldots,g^{t}(X))\equiv 0 \pmod{p(X)}\}\setminus\{\bszero\}.$$

The easy part is again to estimate $K_1$, which is $\leq t$, since 
$$k_1(X)Y+k_2(X)Y^2+\cdots+k_t(X)Y^{t}\equiv 0 \pmod{p(X)}$$ 
has at most $t$ solutions for $Y$ modulo $p(X)$. 

In the following we show that $K_2\leq t(p^{m-d}-1)$.

We know that for each $a(X)\in G_{p,m}^*$ the congruence 
$$k_1(X)Y+k_2(X)Y^2+\cdots+k_{t}(X)Y^{t}\equiv a(X) \pmod{p(X)}$$ 
has at most $t$ solutions for $Y$ modulo $p(X)$. 
As $B(X)$ and $p(X)$ are coprime 

$$B(X)k_1(X)Y+B(X)k_2(X)Y^2+\cdots+B(X)k_{t}(X)Y^{t}\equiv b(X) \pmod{p(X)}$$ 

has at most $t$ solutions for each $b(X)\in G_{p,m}^*$. By Lemma~\ref{lem:4} only $p^{m-d}-1$ values of $b(X)$ have to be considered. 
Hence we have $K_2\leq t(p^{m-d}-1)$. 

Then the result follows exactly by the same arguments as in the proof of Theorem~\ref{thm:1}. 

\begin{rem}{\rm
Note that bounding $K_2$ in the proof of Theorem~\ref{thm:2} would not work if we consider generating tuples of the form $(1,g(X),\ldots,g^{t-1}(X))$ instead of $(g(X),g^2(X),\ldots,g^{t}(X))$. This is the reason why we defined Korobov polynomial lattice point sets in this way. 
Furthermore, mixing a polynomial point set $\mathcal{P}(1,p(X))$ with the first component $(n/p^m)_{n=0,1,\ldots,p^m-1}$ won't result in good discrepancy bounds. 
}
\end{rem}

\section*{Acknowledgments}

The author is supported by the Austrian Science Fund (FWF):
Project F5505-N26, which is a part of the Special Research Program
``Quasi-Monte Carlo Methods: Theory and Applications''.

\end{document}